\documentclass[onefignum,onetabnum]{siamart190516}

\usepackage{lipsum}
\usepackage{amsfonts}
\usepackage{graphicx}
\usepackage{epstopdf}
\usepackage{algorithmic}
\usepackage{amsopn}
\DeclareMathOperator{\diag}{diag}

\ifpdf
  \DeclareGraphicsExtensions{.eps,.pdf,.png,.jpg}
\else
  \DeclareGraphicsExtensions{.eps}
\fi

\newsiamremark{remark}{Remark}
\newsiamremark{hypothesis}{Hypothesis}
\crefname{hypothesis}{Hypothesis}{Hypotheses}
\newsiamthm{claim}{Claim}

\headers{Regularized inexact interior-point method}{J. Cornelis \& W. Vanroose }

\title{Convergence analysis of a regularized inexact interior-point method for linear programming problems\thanks{Submitted to the editors \today.
\funding{This work was funded in part by VLAIO Baekeland project HBC.2020.2867 and Motulus BV.}}}
\author{Jeffrey Cornelis\thanks{Department of Mathematics, University of Antwerp, Middelheimlaan 1, 2020 Antwerp, Belgium. (\email{jeffrey.cornelis@uantwerp.be})}
        \and Wim Vanroose\thanks{Department of Mathematics, University of Antwerp, Middelheimlaan 1, 2020 Antwerp, Belgium. (\email{wim.vanroose@uantwerp.be})}
	}

\begin{document}

\maketitle

\begin{abstract}
Interior-point methods for linear programming problems require the repeated solution of a linear system of equations. Solving these linear systems is non-trivial due to the severe ill-conditioning of the matrices towards convergence.  This issue can be alleviated by incorporating suitable regularization terms in the linear programming problem. Regularization also allows us to efficiently handle rank deficient constraint matrices.  We provide a convergence analysis of a regularized inexact interior-point method. The term `inexact' refers to the fact that we do not need to compute the true solution of the linear system of equations, only an approximation thereof. The formulation of the algorithm is sufficiently general such that specialized linear algebra routines developed in other work on inexact interior-point methods can also be incorporated in our regularized framework. In this work, we exploit the inexactness by using a mixed-precision solver for the linear system of equations. More specifically, we perform a Cholesky factorization in IEEE single precision and use it as a preconditioner for the Conjugate Gradient method. Numerical experiments illustrate the benefits of this approach applied to linear programming problems with a dense constraint matrix.   
\end{abstract}

\begin{keywords}
Inexact interior-point method, regularization, linear programming, mixed-precision \end{keywords}

\begin{AMS}
  90C51, 90C05, 65F22
\end{AMS}

\section{Introduction}

We consider the standard form linear programming problem
\begin{equation} \label{eq:primal}
\min_{x\in\mathbb{R}^n} c^T x \qquad \text{subject to} \qquad Ax = b,\, x\geq 0 
\end{equation}
with $x,c \in \mathbb{R}^n, b\in \mathbb{R}^m$ and $A\in\mathbb{R}^{m \times n}$ and its dual formulation 
\begin{equation}\label{eq:dual}
\max_{y\in\mathbb{R}^m} b^T y \qquad \text{subject to} \qquad A^T y + z = c,\, z\geq 0
\end{equation}
with $y\in\mathbb{R}^m$ and $z\in\mathbb{R}^n$. We refer to $x$ as the primal variables and $(y,z)$ as the dual variables. 
We do not need to assume that $A$ has full rank, which is one of the benefits of the approach proposed in the current paper. Inequalities should be interpreted component-wise.

The Karush-Kuhn-Tucker (KKT) conditions of the primal-dual pair of linear programming problems \cref{eq:primal} and \cref{eq:dual} are given by the non-linear system of equations
\begin{align}
Ax - b &= 0, \label{eq:primal_kkt}\\ 
A^T y + z - c &= 0,\label{eq:dual_kkt}\\
x_i z_i &= 0, \qquad (i = 1,\ldots,n) \label{eq:complementarity}\\
(x,z) &\geq 0. \label{eq:nonneg}
\end{align}
We assume that there exists a strictly feasible point, i.e. a point $(x,z)>0$ that satisfies conditions \cref{eq:primal_kkt,eq:dual_kkt}. In this case it is well known that the KKT conditions are necessary and sufficient for $x$ the be a solution of the primal linear programming problem \cref{eq:primal} and for $(y,z)$ to solve the dual problem \cref{eq:dual}, see for instance \cite{boyd2004convex,nocedal2006numerical}.

Primal-dual interior-point methods generally compute a perturbed Newton direction to the KKT system above, where the complementarity condition \cref{eq:complementarity} is relaxed by $x_i z_i = \tau$ for some positive scalar $\tau >0$. The solution $(x(\tau),y(\tau),z(\tau))$ obtained by solving equations \cref{eq:primal_kkt}, \cref{eq:dual_kkt}, \cref{eq:nonneg} and $x_i z_i=\tau$ for all $i$, guides us towards the true solution of the KKT equations for $\tau\rightarrow 0$. This set of points parametrized by $\tau$ is referred to in literature as the central path. Most primal-dual interior-point methods compute a sequence of iterates in some neighborhood of this central path and require that all iterates remain strictly feasible with respect to the constraint $(x,z)>0$. We refer the reader to \cite{wright1997primal} for an excellent general reference on primal-dual interior-point methods.
In this work we shall consider $\tau = \beta_1 x^T z / n$ where $\beta_1 \in(0,1)$ is called the centering parameter. In a typical primal-dual infeasible interior-point method we have that in each iteration a Newton-like direction is computed as:
\begin{equation} \label{eq:newton_eq}
\begin{pmatrix} A & 0 & 0 \\ 0 & A^T & I \\ Z & 0 & X \end{pmatrix} \begin{pmatrix} \Delta x \\ \Delta y \\ \Delta z \end{pmatrix} = 
\begin{pmatrix} b - A x \\ c - A^T y - z \\ \tau e - X z \end{pmatrix}.
\end{equation}
Here, we use $X = \diag(x)$ and $Z = \diag(z)$ to denote diagonal matrices containing the elements of $x$ and $z$ respectively. In addition we denote $e = (1,1,\ldots,1)^T \in \mathbb{R}^n$ the vector of length $n$ containing all ones, and $0$ and $I$ the zero-matrix and identity matrix respectively, where the dimensions should be clear from the context. Note that the matrix in \cref{eq:newton_eq} is simply the Jacobian matrix of the non-linear system of equations defined by \cref{eq:primal_kkt}--\cref{eq:complementarity}. These equations are only `mildly' non-linear in the sense that the only non-linearity is due to the complementarity condition.

So-called inexact interior-point methods \cite{korzak2000convergence,gondzio2013convergence,baryamureeba2000convergence,bellavia1998inexact,monteiro2003convergence,al2009convergence} do not require that \cref{eq:newton_eq} is solved exactly (i.e. to full machine precision accuracy). This type of interior-point method is for instance useful when we want to use some Krylov subspace method to solve a suitable reformulation of the linear system of equations. Two main approaches exist. The first consists of transforming \cref{eq:newton_eq} to a linear system of equations with a symmetric indefinite matrix, which is referred to as the augmented system approach. In this case we can use a symmetric indefinite Krylov subspace method, such as SYMMLQ or MINRES \cite{paige1975solution}. Alternatively, we can also reformulate \cref{eq:newton_eq} to a linear system with a symmetric positive definite matrix, in which case Conjugate Gradients (CG) \cite{hestenes1952methods} is preferred. This reformulation is referred to as the normal equations approach. We provide some more details on these two different approaches and how we can efficiently solve them in \cref{sec:reformulations}. For more general information on Krylov subspace methods we refer to \cite{liesen2013krylov,saad2003iterative}.

When the matrix $A$ does not have full row rank then it holds that the matrix \cref{eq:newton_eq} is singular, which is also true for the reduced linear systems. Moreover, even when $A$ has full rank, solving a suitable reformulation of the linear system \cref{eq:newton_eq} might lead to numerical difficulties. The main reason for this is the ill-conditioning of the matrices towards convergence caused by complementarity of the variables $x$ and $z$, which is a well-known result in the literature. This is also the main difficulty discussed in a lot of the inexact interior-point methods based on Krylov subspace methods. These Krylov subspace methods need a very good preconditioner to make reasonable progress towards the solution, since the rate of convergence is related to the condition number of the matrix. 

To alleviate these issues, several papers have appeared in the literature that start from a regularized formulation of the linear programming problem, or introduce regularization terms directly in the linear systems \cite{friedlander2012primal,altman1999regularized,saunders1996solving,gondzio2012matrix}. These references either assume that the linear systems are solved exactly or provide no formal convergence analysis at all. As far as we known, no attempt has been made to provide a convergence analysis of a primal-dual infeasible interior-point method that solves a \textit{regularized} linear system \textit{inexactly}. The current manuscript fills this gap.

We keep the description of the newly proposed algorithm sufficiently general such that specialized linear algebra routines developed in previous work on inexact interior-point methods can also be incorporated in our regularized framework. Krylov subspace methods with specialized preconditioners \cite{gondzio2013convergence,cui2019implementation,monteiro2003convergence,al2009convergence} are especially well-suited. 
Note that Krylov subspace methods are in general designed for large and sparse matrices. However, we will use them in a slightly different context for our numerical experiments. More specifically we will consider test-problems with a dense constraint matrix $A$ which requires the approximate solution of a dense linear system of equations in each iteration of the interior-point method. We will solve these systems (approximately) by performing a Cholesky factorization in IEEE single precision and then perform -- if necessary -- a few iterations of the Conjugate Gradient method, preconditioned with the (dense) triangular Cholesky factors. In this way we can interpret the CG iterates as a form of iterative refinement, rather than a true iterative solution method. Alternatively this can be seen as some kind of hybrid between a direct method and an iterative method. Similar techniques have recently been proposed to accelerate the solution of linear systems of equations using lower precision arithmetic \cite{carson2017new,carson2018accelerating,higham2021exploiting,higham2019squeezing}. More details will be provided in \cref{sec:implementation}.

The rest of the paper is organized as follows. In \cref{sec:description} we give a detailed description of the new algorithm and provide a convergence analysis. Next, we discuss some implementation details in \cref{sec:implementation}. In \cref{sec:num_ex} we report a number of experiments which illustrate the benefit of the proposed algorithm. Lastly, this work is concluded and an outlook is given in \cref{sec:conclusions}.

\section{Description of the algorithm} \label{sec:description}
To derive our algorithm we use the regularized linear programming problem defined in \cite{friedlander2012primal}:
\begin{equation}\label{eq:regularized_lp}
\min_{(x,w)}  c^T x +  \frac{\rho}{2}\| x - x^k \|^2 + \frac{\delta}{2}\| w + y^k \|^2  \qquad \text{subject to} \qquad Ax +\delta w = b,\, x\geq 0. 
\end{equation} 
Here, $\rho \geq 0$ and $\delta \geq 0$ are regularization parameters, $x^k$ and $y^k$ are the current estimates of the primal and dual solutions and $w\in\mathbb{R}^m$ is an auxiliary variable. Here and throughout an upper-index refers to a certain iteration number while a lower-index refers to a particular component of a vector. For example $x_i^k$ denotes the $i$-th component of the $k$-th iterate $x^k$. The norm $\|\cdot\|$ denotes the Euclidean norm.

Note that we recover \cref{eq:primal} for the choice $\rho = \delta = 0$.
Our algorithm will coincide with the one presented in \cite{korzak2000convergence} for these choices of regularization parameters. With this in mind, we purposefully keep the description of our algorithm as close as possible to \cite{korzak2000convergence}, such that the subtle difference between the two is apparent. 

The KKT conditions for \cref{eq:regularized_lp} are given by
\begin{align*}
Ax + \delta w - b &= 0, \\ 
A^T y + z - c - \rho (x - x^k) &= 0,\\
x_i z_i &= 0, \qquad (i = 1,\ldots,n)\\
\delta (w + y^k) - \delta y &= 0, \\
(x,z) &\geq 0.
\end{align*}
From the second to last equation we immediately get the relation $w = y - y^k$ when $\delta > 0$. Elimination of this variable from the KKT equations and relaxing the complementarity condition $x_i z_i = \tau$, we get the following Newton-like update
\begin{equation} \label{eq:newton_eq_reg}
\begin{pmatrix} A & \delta I & 0 \\ -\rho I & A^T & I \\ Z & 0 & X \end{pmatrix} \begin{pmatrix} \Delta x \\ \Delta y \\ \Delta z \end{pmatrix} =  \begin{pmatrix} \xi \\ \zeta \\ \eta \end{pmatrix},
\end{equation}
with $\xi =  b - A x - \delta(y - y^k),\zeta = c +\rho(x - x^k)- A^T y - z$ and $\eta=\tau e - X z$. Now if we choose $x = x^k, y = y^k$ and $z = z^k$, then the right-hand sides in \cref{eq:newton_eq,eq:newton_eq_reg} coincide and the Jacobian matrices only differ by the terms $\delta I$ and $\rho I$. 

We modify the algorithm in \cite{korzak2000convergence} such that in stead of solving linear systems of the form \cref{eq:newton_eq} inexactly, we now solve linear systems of the form \cref{eq:newton_eq_reg} inexactly. Note that the analysis in \cite{friedlander2012primal} assumes that the linear systems are solved exactly and that the analysis in \cite{korzak2000convergence} does not include any form of regularization. Hence, the algorithm in the current manuscript can be seen as a combination of \cite{friedlander2012primal} and \cite{korzak2000convergence}. 

Suppose we have some iterate $(x^k,y^k,z^k)\in\mathbb{R}^{n + m + n}$. We refer to this point as an $(\epsilon,\epsilon_p,\epsilon_d)$-solution if the following conditions hold: 
\begin{equation}\label{eq:def_conv}
(x^k,z^k)\geq 0,\, (x^k)^Tz^k \leq \epsilon,\, \|Ax^k - b\| \leq \epsilon_p,\,\|A^T y^k + z^k - c\| \leq \epsilon_d. 
\end{equation}
The goal is to find an $(\epsilon,\epsilon_p,\epsilon_d)$-solution to the pair of linear programming problems \cref{eq:primal,eq:dual} or the determine they are likely to be infeasible when $||(x^k,z^k)||_1>\omega$, for some sufficiently large number $\omega$. Some comments on the latter stopping-criterion are given later in this section, see \cref{thm:omega_stop}.

We make sure all iterates of the interior-point method remain in the following central-path neighborhood, originally proposed by Kojima, Megiddo, Mizuno in \cite{kojima1993primal}:
\begin{align} \label{eq:N}
\mathcal{N} = \{(x,y,z) \in\mathbb{R}^{n + m + n}\,:\, &(x,z)>0,  \\ 
																											 &x_i z_i \geq \gamma x^T z/n \quad (i = 1,\ldots,n \nonumber), \\ 
																											 & x^T z \geq \gamma_p \|Ax - b \| \text{ or }  \|Ax - b \| \leq \epsilon_p, \nonumber \\
																									     & x^T z \geq \gamma_d \|A^T y + z - c\| \text{ or }  \|A^T y + z - c\| \leq \epsilon_d, \nonumber \}
\end{align}
for some constants $\gamma_p,\gamma_d>0$ and $\gamma \in (0,1)$. The same neighborhood is also used in \cite{korzak2000convergence}. When $\gamma,\gamma_p$ and $\gamma_d$ are all very small, say all equal to $10^{-8}$, then these conditions are very loose.

We now have all ingredients needed to formulate the regularized inexact interior-point method, see \cref{alg:KMM}. Suppose we have some iterate $(x^k,y^k,z^k)$ that is not yet an $(\epsilon,\epsilon_p,\epsilon_d)$-solution or does not satisfy $||(x^k,z^k)||_1>\omega$. 
Then we compute an approximation $(\Delta x^k,\Delta y^k,\Delta z^k)$ to the Newton direction \cref{eq:newton_eq_reg} for the choice $(x,y,z) = (x^k,y^k,z^k)$, see \cref{eq:update_eq}. The accuracy of the approximation is described using the norm of the \textit{residual components}, as shown in \cref{eq:acc_r,eq:acc_s}. Next, suitable step-lengths $\alpha^k_p$ and $\alpha^k_d$ are computed such that the new iterates $x^{k+1} = x^k + \alpha^k_p\Delta x^k$ and $(y^{k+1},z^{k+1}) = (y^{k},z^k) + \alpha^k_d(\Delta y^k, \Delta z^k)$ remain in the central path neighborhood $\mathcal{N}$ and satisfy an additional descent condition \cref{eq:descent}. Since $\beta_3>\beta_2$ we can always choose $\alpha_p^k = \alpha_d^k = \bar{\alpha}^k$. However, taking different step-lengths for the primal and dual variables allows us to take larger steps whenever possible and thus make more progress towards the solution. 

We now turn our attention to a convergence analysis of \cref{alg:KMM}. Our analysis closely resembles the analysis in \cite{korzak2000convergence}, which in its turn closely resembles the analysis of the exact counterpart of Kojima, Megiddo, Mizuno in \cite{kojima1993primal}. 

Let us denote the following functions for $i = 1,2,\ldots,n$:
\begin{align}
f_i^k (\alpha) &= (x_i^k + \alpha \Delta x^k_i)(z_i^k + \alpha \Delta z^k_i) - \gamma (x^k + \alpha\Delta x^k)^T(z^k + \alpha\Delta z^k)/n, \label{eq:def_f} \\
g_p^k(\alpha)  &= (x^k + \alpha\Delta x^k)^T(z^k + \alpha\Delta z^k) - \gamma_p \|A(x^k + \alpha\Delta x^k) - b \|, \label{eq:def_g_primal} \\
g_d^k(\alpha)  &= (x^k + \alpha\Delta x^k)^T(z^k + \alpha\Delta z^k) - \gamma_d \|A^T(y^k + \alpha\Delta y^k)+  z^k + \alpha\Delta z^k - c \|,\label{eq:def_g_dual} \\ 
h^k(\alpha)    &= (1-\alpha(1 - \beta_2)) (x^{k})^T z^k -(x^k + \alpha\Delta x^k)^T(z^k + \alpha\Delta z^k). \label{eq:def_h}
\end{align}
Furthermore, let us denote $\dot{\alpha}^k$ the largest value in $[0,1]$ such that for all $i=1,\ldots,n$:
\begin{align}
&f_i^k(\alpha) \geq 0,\, h^k(\alpha) \geq 0, \label{eq:ineq1} \\
&g^k_p(\alpha) \geq 0\text{ or } \|A(x^k + \alpha\Delta x^k) - b \| \leq \epsilon_p   \label{eq:ineq2}\\
&g^k_d(\alpha) \geq 0\text{ or } \|A^T(y^k + \alpha\Delta y^k)+  z^k + \alpha\Delta z^k - c \| \leq \epsilon_d,   \label{eq:ineq3}
\end{align}
for all $\alpha \in[0,\dot{\alpha}^k]$.

\begin{algorithm}
\caption{Regularized inexact interior-point method}
\label{alg:KMM}
\begin{algorithmic}[1]
\STATE{Choose $\omega\gg 1$ large, $\epsilon,\epsilon_p,\epsilon_d, \gamma_p,\gamma_d>0\,,\gamma \in (0,1)$ and $0<\beta_1<\beta_2<\beta_3<1$.}
\STATE{Choose regularization parameter $\delta$ and $\rho$ sufficiently small.}
\STATE{Choose parameters $\tau_1,\tau_2 \in (0,1]$ such that $\beta_1 + \tau_1 - 1 > 0$ and $\beta_1 + \tau_2 - 1 > 0$.}
\STATE{Compute initial point $(x^0,y^0,z^0)\in\mathcal{N}$, with $\mathcal{N}$ defined by \cref{eq:N}.}
\FOR{$k = 0,1,2,\ldots$}
\STATE{If $(x^k,y^k,z^k)$ is an $(\epsilon,\epsilon_p,\epsilon_d)$-solution or $||(x^k,z^k)||_1 > \omega$: \textbf{STOP}.}
\STATE{Compute an \textit{inexact} search direction $(\Delta x^k,\Delta y^k,\Delta z^k)$ that satisfies:
\begin{equation} \label{eq:update_eq}
\begin{pmatrix} A & \delta I & 0 \\ -\rho I & A^T & I \\ Z^k & 0 & X^k \end{pmatrix} \begin{pmatrix} \Delta x^k \\ \Delta y^k \\ \Delta z^k \end{pmatrix} = 
\begin{pmatrix} b - A x^k \\ c - A^T y^k - z^k \\ \beta_1\mu_k e - X^k z^k \end{pmatrix} + \begin{pmatrix}r^k \\ s^k\\ 0\end{pmatrix}
\end{equation}
with $\mu_k = \frac{\left(x^k\right)^T \left(z^k\right)}{n}$ and where the \textit{residual components} satisfy
\begin{align} &||r^k||\leq (1-\tau_1) ||Ax^k - b||, \label{eq:acc_r}\\
               &||s^k|| \leq (1 - \tau_2)||A^T y^k + z^k - c||. \label{eq:acc_s} \end{align}}
							\vspace{-0.5cm}
\STATE{Compute step-length $\alpha^{*,k} = \min\{\alpha^{*,k}_p,\alpha^{*,k}_d\}$ with 
\begin{align*} \alpha^{*,k}_p &= \max\{\alpha\in\mathbb{R}: x^k + \alpha \Delta x^k \geq 0\}, \\
               \alpha^{*,k}_d &= \max\{\alpha\in\mathbb{R}: z^k + \alpha \Delta z^k \geq 0\}. \end{align*} \label{alpha_star}}
							\vspace{-0.5cm}
\STATE{If $(x^k,y^k,z^k) + \alpha^{*,k}(\Delta x^k,\Delta y^k,\Delta z^k)$ is an  $(\epsilon,\epsilon_p,\epsilon_d)$-solution: \textbf{STOP}. \label{terminated}}
\STATE{Let $\bar{\alpha}^k$ be the largest value in $[0,1]$ such that for all $\alpha\in[0,\bar{\alpha}^k]:$
\begin{align*} &(x^k,y^k,z^k) + \alpha (\Delta x^k,\Delta y^k,\Delta z^k) \in\mathcal{N} \\
               &(x^{k} + \alpha \Delta x^k)^T (z^{k} + \alpha \Delta z^k) \leq ( 1-\alpha(1-\beta_2)) \left(x^k\right)^T z^k \end{align*} \label{alpha_bar}}
							\vspace{-0.5cm}
\STATE{Choose $\alpha_p^k,\alpha_d^k \in[0,1]$ such that $(x^{k+1},y^{k+1},z^{k+1}) \in \mathcal{N}$ and
\begin{equation}\label{eq:descent}
\left(x^{k+1}\right)^T z^{k+1} \leq (1 - \bar{\alpha}^k(1-\beta_3))\left(x^k\right)^T z^k 
\end{equation}
with $(x^{k+1},y^{k+1},z^{k+1})  = (x^{k} + \alpha_p^k\Delta x^{k},y^{k} + \alpha_d^k\Delta y^k ,z^{k} + \alpha_d^k \Delta z^k)$. \label{enough}}
\ENDFOR
\end{algorithmic}
\end{algorithm}

Using the definitions above, we can show the following lemma: 
\begin{lemma}[Upper bound on $\bar{\alpha}^k$ \cite{korzak2000convergence}] \label{thm:upper_bound_alpha}
If \cref{alg:KMM} does not stop on line \ref{terminated} of iteration $k$, then $\dot{\alpha}^{k}=\bar{\alpha}^k < \alpha^{*,k}$. 
\end{lemma}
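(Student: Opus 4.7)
My plan is to reduce the lemma to proving the single strict inequality $\dot{\alpha}^k < \alpha^{*,k}$, and then derive a contradiction from assuming the opposite. First, I would observe that the conditions defining $\bar{\alpha}^k$, namely $(x^k,y^k,z^k) + \alpha(\Delta x^k,\Delta y^k,\Delta z^k) \in \mathcal{N}$ together with the descent inequality, unpack via \cref{eq:N} and the definitions \cref{eq:def_f,eq:def_g_primal,eq:def_g_dual,eq:def_h} into exactly the conditions \cref{eq:ineq1,eq:ineq2,eq:ineq3}, plus the \emph{strict} positivity $(x(\alpha),z(\alpha)) > 0$. Since $(x^k,z^k) > 0$ and strict positivity persists by continuity up to the first $\alpha$ at which some component of $x$ or $z$ hits zero, namely $\alpha^{*,k}$, the strict-positivity requirement is automatic on $[0,\dot{\alpha}^k]$ as soon as $\dot{\alpha}^k < \alpha^{*,k}$. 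Consequently $\bar{\alpha}^k = \dot{\alpha}^k$ follows once the strict inequality is established.

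To prove $\dot{\alpha}^k < \alpha^{*,k}$ I would argue by contradiction. The case $\alpha^{*,k} > 1$ is immediate since $\dot{\alpha}^k \leq 1$, so assume $\alpha^{*,k} \leq 1$ and $\dot{\alpha}^k \geq \alpha^{*,k}$; this ensures \cref{eq:ineq1,eq:ineq2,eq:ineq3} hold at $\alpha = \alpha^{*,k}$. Let $(\hat x,\hat y,\hat z) = (x^k,y^k,z^k) + \alpha^{*,k}(\Delta x^k,\Delta y^k,\Delta z^k)$. By the definition of $\alpha^{*,k}$ at least one component $\hat x_i$ or $\hat z_i$ equals zero, while the entire vector $(\hat x,\hat z)$ is componentwise nonnegative. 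Plugging this index $i$ into $f_i^k(\alpha^{*,k}) \geq 0$ yields $\hat x^T \hat z \leq 0$, whereas summing $f_j^k(\alpha^{*,k}) \geq 0$ over $j$ and using $\gamma \in (0,1)$ gives $(1-\gamma)\hat x^T \hat z \geq 0$. Together these force $\hat x^T \hat z = 0$, which in combination with componentwise nonnegativity forces $\hat x_j \hat z_j = 0$ for every $j$.

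With $\hat x^T \hat z = 0$, the first clause of \cref{eq:ineq2} yields $\|A\hat x - b\| \leq \hat x^T \hat z/\gamma_p = 0$, and the alternative clause gives $\|A\hat x - b\| \leq \epsilon_p$ directly; either way $\|A\hat x - b\| \leq \epsilon_p$. The dual residual bound $\|A^T \hat y + \hat z - c\| \leq \epsilon_d$ follows identically from \cref{eq:ineq3}. Combined with $\hat x^T \hat z = 0 \leq \epsilon$ and $(\hat x,\hat z) \geq 0$, the point $(\hat x,\hat y,\hat z)$ satisfies every requirement in \cref{eq:def_conv} of being an $(\epsilon,\epsilon_p,\epsilon_d)$-solution, contradicting the hypothesis that the algorithm did not stop at line \ref{terminated}. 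The only delicate point I foresee is making precise the matching between the $\mathcal{N}$-conditions and \cref{eq:ineq1,eq:ineq2,eq:ineq3} and the continuity argument that recovers the strict positivity lost in the reduction to $f_i^k$; the remaining manipulations (summation over $i$, sign of $\hat x^T \hat z$, and the residual bounds) are short algebraic steps.
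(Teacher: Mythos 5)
Your proposal is correct and follows essentially the same argument as the paper: assume $\dot{\alpha}^k \geq \alpha^{*,k}$, use $f_i^k(\alpha^{*,k})\geq 0$ at the index where a component vanishes to force $\hat{x}^T\hat{z}=0$, deduce the primal and dual residual bounds from \cref{eq:ineq2,eq:ineq3}, and conclude that the point would be an $(\epsilon,\epsilon_p,\epsilon_d)$-solution, contradicting the fact that line \ref{terminated} did not trigger; the identification $\bar{\alpha}^k=\dot{\alpha}^k$ then follows from strict positivity below $\alpha^{*,k}$ exactly as in the paper. The only cosmetic differences are your explicit handling of the case $\alpha^{*,k}>1$ and obtaining $\hat{x}^T\hat{z}\geq 0$ by summing the $f_j^k$ rather than directly from componentwise nonnegativity, neither of which changes the substance.
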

\begin{proof}
The proof of this lemma is given in \cite{korzak2000convergence}. For the sake of completeness, we show the proof again using the notation of the current manuscript. 

 Suppose that $\dot{\alpha}^k \geq \alpha^{*,k}$. Then by definition of $\dot{\alpha}^k$ we have that $\alpha^{*,k}$ satisfies the conditions \cref{eq:ineq1,eq:ineq2,eq:ineq3}. However, by definition of $\alpha^{*,k}$ (see line \ref{alpha_star} in \cref{alg:KMM}) we know that there exists at least one index $i$ such that $(x_i^k + \alpha^{*,k}\Delta x_i^k)(z_i^k + \alpha^{*,k}\Delta z_i^k) = 0$. Combining this result with the fact that $f_i^k(\alpha^{*,k})\geq0$, we immediately get that $(x^k + \alpha^{*,k}\Delta x^k)^T (z^k + \alpha^{*,k}\Delta z^k) = 0$ for this particular index $i$. 

Now suppose $g_p^k(\alpha^{*,k}) < 0$, then we have $\|A(x^k + \alpha^{*,k}\Delta x^k) - b \| \leq \epsilon_p$. Otherwise, if $g_p^k(\alpha^{*,k}) \geq 0$, we get 
 $\|A(x^k + \alpha^{*,k}\Delta x^k) - b \| = 0 $ by definition of $g_p^k(\alpha)$ and the fact that $(x^k + \alpha^{*,k}\Delta x^k)^T (z^k + \alpha^{*,k}\Delta z^k) = 0$. Similarly, we can show the inequality $\|A^T(y^k + \alpha^{*,k}\Delta y^k)+  z^k + \alpha^{*,k}\Delta z^k - c \| \leq \epsilon_d$. This of course means that 
$(x^k,y^k,z^k) + \alpha^{*,k}(\Delta x^k,\Delta y^k,\Delta z^k)$ is an $(\epsilon,\epsilon_p,\epsilon_d)$-solution as defined by \cref{eq:def_conv}, which implies that the algorithm should have terminated on line \ref{terminated}. This contradicts the assumption made in the lemma and thus we have shown that $\dot{\alpha}^k < \alpha^{*,k}$. The proof now concludes by observing that the equality $\bar{\alpha}^k = \dot{\alpha}^k$ holds since $\dot{\alpha}^k$ satisfies the constraints $x^k + \dot{\alpha}^k\Delta x^k > 0$ and $z^k + \dot{\alpha}^k\Delta z^k > 0$ since we have a strict inequality $\dot{\alpha}^k < \alpha^{*,k}$.
\end{proof}

\begin{remark}\label{thm:compute_bar}
From this lemma it follows that we do not always need to compute $\bar{\alpha}^{k}$, since this result implies 
\begin{equation}\label{eq:ineq_use}
(1 - \alpha^{*,k}(1-\beta_3))(x^k)^Tz^k \leq (1 - \bar{\alpha}^k(1-\beta_3))(x^k)^Tz^k.
\end{equation}
Hence, we can compute trial step-lengths $\alpha_p^k = \beta_4 \alpha_p^{*,k}$ and $\alpha_d^k = \beta_4\alpha_d^{*,k}$ with parameter $\beta_4 \in (0,1)$ some constant close to one, e.g. $\beta_4 = 0.99995$, to make sure the iterates remain strictly positive. Then we check whether the new iterates $(x^{k+1},y^{k+1},z^{k+1})$ computed with these step-lengths remain in the neighborhood $\mathcal{N}$ and satisfy the descent condition $(x^{k + 1})^T z^{k + 1} \leq (1 - \alpha^{*,k}(1-\beta_3))(x^k)^Tz^k$, since this implies that \cref{eq:descent} is satisfied since we have the inequality \cref{eq:ineq_use}. 
\end{remark}

\begin{theorem}
\Cref{alg:KMM} terminates after a finite number of iterations for regularization parameters $\rho>0$ and $\delta>0$ sufficiently small. 
\end{theorem}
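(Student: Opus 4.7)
The plan is to mimic the structure of the Korzak convergence proof and show that, for $\rho,\delta$ sufficiently small, three key quantities still make measurable progress at each iteration: the duality measure $\mu_k=(x^k)^T z^k/n$, the primal residual $\|Ax^k-b\|$, and the dual residual $\|A^T y^k+z^k-c\|$. Combining these with the stopping criterion $\|(x^k,z^k)\|_1>\omega$ will force termination in finitely many steps.

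First I would derive the one-step residual recurrences. From the first block-row of \cref{eq:update_eq} together with \cref{eq:acc_r} one gets
\begin{equation*}
\|A(x^k+\alpha\Delta x^k)-b\|\le (1-\alpha\tau_1)\|Ax^k-b\|+\alpha\delta\|\Delta y^k\|,
\end{equation*}
and analogously from the second block-row and \cref{eq:acc_s},
\begin{equation*}
\|A^T(y^k+\alpha\Delta y^k)+(z^k+\alpha\Delta z^k)-c\|\le (1-\alpha\tau_2)\|A^T y^k+z^k-c\|+\alpha\rho\|\Delta x^k\|.
\end{equation*}
Since iterates remain in $\mathcal{N}$ or are bounded by $\omega$ before the $\omega$-test triggers, $\|(x^k,z^k)\|_1$ is bounded, and from \cref{eq:update_eq} plus the membership in $\mathcal{N}$ the increments $\|\Delta x^k\|,\|\Delta y^k\|,\|\Delta z^k\|$ can be bounded by a constant $C$ independent of $k$ (using the relation between the condition $x_i^k z_i^k\ge\gamma\mu_k/n$ and the norm of the Newton update, as is standard and appears in \cite{kojima1993primal,korzak2000convergence}). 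Hence, choosing $\rho,\delta$ small enough that $\delta C\le \tfrac12\tau_1\|Ax^k-b\|$ and $\rho C\le\tfrac12\tau_2\|A^T y^k+z^k-c\|$ whenever the residuals exceed $\epsilon_p,\epsilon_d$ respectively, yields genuine contraction factors $(1-\tfrac12\alpha\tau_1)$ and $(1-\tfrac12\alpha\tau_2)$ on the two feasibility residuals.

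Second, I would show $\bar\alpha^k$ is bounded below by a constant $\alpha_{\min}>0$. Each of the polynomials $f_i^k(\alpha), g_p^k(\alpha), g_d^k(\alpha), h^k(\alpha)$ in \cref{eq:def_f}--\cref{eq:def_h} is a quadratic in $\alpha$ whose value at $\alpha=0$ is nonnegative (by $(x^k,y^k,z^k)\in\mathcal{N}$) and whose linear coefficient, computed from \cref{eq:update_eq}, involves the regularized Newton direction. Using $\beta_1+\tau_i-1>0$ and the bounds on $\|\Delta x^k\|,\|\Delta y^k\|,\|\Delta z^k\|$ together with $\mu_k$ bounded away from $0$ between two iterations, one shows that the quadratic coefficients of these polynomials are dominated by a linear coefficient that is strictly positive (or, where it is not, the $\epsilon_p/\epsilon_d$ alternative in \cref{eq:ineq2,eq:ineq3} takes over). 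A direct estimate then produces $\dot\alpha^k=\bar\alpha^k\ge\alpha_{\min}$; here the perturbation introduced by the $\delta I$ and $\rho I$ blocks enters only through lower-order terms, which again is why $\rho,\delta$ must be chosen small.

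Combined with \cref{eq:descent} this gives $\mu_{k+1}\le(1-\alpha_{\min}(1-\beta_3))\mu_k$, so $\mu_k\to 0$ geometrically, and the one-step residual recurrence gives analogous geometric decay of $\|Ax^k-b\|$ and $\|A^T y^k+z^k-c\|$ down to the $\epsilon_p,\epsilon_d$ thresholds. Therefore, in finitely many iterations either the $(\epsilon,\epsilon_p,\epsilon_d)$-solution test on line~6 or line~9 succeeds, or the iterate has escaped to $\|(x^k,z^k)\|_1>\omega$ and the algorithm stops. I expect the main obstacle to be the second step: making precise how the regularization terms $\delta\Delta y^k$ and $\rho\Delta x^k$ perturb the Korzak polynomials $f_i^k,g_p^k,g_d^k,h^k$ and quantifying ``sufficiently small'' so that those perturbations cannot destroy the positivity of their linear coefficients; once that bookkeeping is done, the rest of the argument is a direct transplantation of \cite{korzak2000convergence}.
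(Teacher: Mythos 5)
Your plan follows essentially the same route as the paper's proof: the same perturbed residual recurrences with the $\alpha\delta\|\Delta y^k\|$ and $\alpha\rho\|\Delta x^k\|$ terms, the same lower bound on $\bar{\alpha}^k$ obtained by estimating the quadratics $f_i^k, g_p^k, g_d^k, h^k$ under explicit smallness conditions on $\rho$ and $\delta$, and the same conclusion of geometric decay of $(x^k)^T z^k$ forcing termination. The one substantive point you leave to a ``standard'' appeal is the uniform bound on $(\Delta x^k,\Delta y^k,\Delta z^k)$: the paper frames the whole argument as a contradiction precisely so that $(x^k)^T z^k \geq \epsilon^* := \min\{\epsilon,\epsilon_p\gamma_p,\epsilon_d\gamma_d\}$ holds for all $k$, and then invokes a uniform-invertibility result for the regularized Jacobian (Theorem 1 of \cite{armand2013uniform}) together with boundedness of the right-hand side of \cref{eq:update_eq} --- this is exactly where the standing lower bound on $\mu_k$ before termination is needed, and your constants $C$ and $\alpha_{\min}$ do not exist without it.
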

\begin{proof}
We follow the same structure as the proof in \cite{korzak2000convergence}, although the details are different since the regularization parameters $\rho$ and $\delta$ have to be carefully taken into account. 

We prove this claim by contradiction, so suppose that the algorithm does not terminate. Then it holds for all $k\geq 0$ that
\begin{equation}\label{eq:lower_bound_mu}
(x^k)^T z^k \geq \epsilon^* := \min\{\epsilon,\epsilon_p\gamma_p,\epsilon_d\gamma_d\}, 
\end{equation}
and that $||(x^k,z^k)||_1 \leq \omega$. This implies that the iterates $x^k$ and $z^k$ remain bounded and that there exists some constant $c_1$ such that $(x^k)^Tz^k \leq c_1$. This observation together with the fact that the iterates all satisfy $(x^k,y^k,z^k)\in\mathcal{N}$ allows us to show that the right-hand side in \cref{eq:update_eq} is bounded. For the first component we have:
\begin{align*}
\|b - Ax^k + r^k\| \leq \|b - Ax^k\| + \|r^k\| &\leq (2-\tau_1)\|Ax^k - b\| \\
&\leq (2-\tau_1) \max\left\{\frac{(x^k)^T z^k}{\gamma_p}, \epsilon_p \right\} \\ &\leq (2-\tau_1)\max\{c_1/\gamma_p,\epsilon_p\}.
\end{align*}
Similarly, we have for the second component in \cref{eq:update_eq} that
\begin{equation*}
\| c - A^T y^k - z^k \| \leq (2-\tau_2) \max\{c_1/\gamma_d,\epsilon_d\}. 
\end{equation*}
For the third component in \cref{eq:update_eq} we have:
\begin{equation*}
\| \beta_1 (x^k)^T z^k e / n - X^k z^k \| \leq \beta_1 (x^k)^T z^k \|e\|/n + \|X^k z^k \|\leq \beta_1 c_1 /\sqrt{n} + \|x^k\| \,\|z^k\|, 
\end{equation*}
which is also bounded since the iterates $x^k$ and $z^k$ are bounded. 

From Theorem 1 in \cite{armand2013uniform} and the fact that $(x^k)^T z^k \geq \epsilon^*$ we get that the matrices in \cref{eq:update_eq} have a bounded inverse, where the bound is independent of $k$. Hence, together with the boundedness of the right-hand side in \cref{eq:update_eq} we can conclude the $(\Delta x^k,\Delta y^k,\Delta z^k)$ is bounded. This implies that we can always find some constant $c_2 > 0$ such that
\begin{equation}\label{eq:bounded}
| \Delta x_i^k \Delta z_i^k - \gamma(\Delta x^k)^T\Delta z ^k/n| \leq c_2 \text{ and } |(\Delta x^k)^T \Delta z^k| \leq c_2. 
\end{equation}
We will now show that we can use this result to prove that $\bar{\alpha}^k$ as defined on line \ref{alpha_bar} in \cref{alg:KMM} is bounded below by some value $\alpha^*>0$. 

For all $k\geq 0$ and $i = 1,\ldots,n$ we have from the third component in \cref{eq:update_eq} that
\begin{equation}\label{eq:third_component}
z_i^k \Delta x_i ^k + x_i^k \Delta z_i ^k = \beta_1 (x^k)^T z^k /n - x_i^k z_i^k.
\end{equation}
From this it follows that
\begin{equation} \label{eq:xz_i}
(x_i^k + \alpha \Delta x^k_i)(z_i^k + \alpha \Delta z^k_i) = (1- \alpha) x_i^k z_i^k + \alpha \beta_1 (x^k)^T z^k /n + \alpha^2 \Delta x_i^k \Delta z_i^k.
\end{equation}
By taking the sum for $i=1,\ldots,n$ of both sides of equality \cref{eq:third_component} we get
\begin{equation*}
(z^k)^T \Delta x^k + (x^k)^T \Delta z^k = (\beta_1 - 1) (x^k)^T z^k
\end{equation*}
from which we get
\begin{equation}\label{eq:xtz}
(x^k + \alpha\Delta x^k)^T(z^k + \alpha\Delta z^k) = (1 +\alpha(\beta_1 - 1)) (x^k)^T z^k + \alpha^2 (\Delta x^k)^T \Delta z^k. 
\end{equation}
Substituting \cref{eq:xz_i,eq:xtz} in the definition of $f_i^k(\alpha)$ in \cref{eq:def_f} we get: 
\begin{align*}
f_i^k(\alpha)  &= (x_i^k + \alpha \Delta x^k_i)(z_i^k + \alpha \Delta z^k_i) - \gamma (x^k + \alpha\Delta x^k)^T(z^k + \alpha\Delta z^k)/n, \\
&= (1- \alpha) x_i^k z_i^k + \alpha \beta_1 (x^k)^T z^k /n +\alpha^2 \Delta x_i^k \Delta z_i^k \\ 
&\hspace{4cm} -\gamma(1 +\alpha(\beta_1 - 1)) (x^k)^T z^k/n - \gamma \alpha^2 (\Delta x^k)^T \Delta z^k / n \\
&= \underbrace{(1 -\alpha)\left(x_i^k z_i^k - \gamma (x^k)^T z^k/n \right)}_{\geq0} + \alpha^2 \underbrace{\left(\Delta x_i^k \Delta z_i^k - \gamma(\Delta x^k)^T\Delta z ^k/n\right)}_{\geq - c_2} \\
&\hspace{6.5cm}+ \alpha\beta_1(1-\gamma)\underbrace{(x^k)^T z^k}_{\geq \epsilon^*}/n \\
&\geq -c_2 \alpha^2 + \alpha\beta_1(1-\gamma)\epsilon^*/n = \alpha( \beta_1(1-\gamma)\epsilon^*/n - c_2 \alpha).
\end{align*}
Hence it follows that for all $\alpha\leq\frac{\beta_1(1-\gamma)\epsilon^*}{c_2 n}$ we have $f_{i}(\alpha)\geq0$.

Let us now consider the function $g_p^k(\alpha)$ defined in \cref{eq:def_g_primal}. From the first component in \cref{eq:update_eq} we have
\begin{align*}
A(x^k + \alpha \Delta x^k) - b &= Ax^k - b + \alpha A\Delta x^k \\
&= Ax^k - b + \alpha(b - Ax^k + r^k - \delta \Delta y^k) \\
&= (1-\alpha)(Ax^k - b) + \alpha r^k - \alpha\delta\Delta y^k.
 \end{align*}
By taking norms and using \cref{eq:acc_r} we get
\begin{equation} \label{eq:bound_primal}
\|A(x^k + \alpha \Delta x^k) - b\| \leq ( 1 -\alpha \tau_1) \|Ax^k - b\| + \alpha \delta \| \Delta y^k\|.
\end{equation}
Suppose first that $g_p^k(0) <0$, then we have that $\| Ax^k - b\| \leq \epsilon_p$ since $(x^k,y^k,z^k)\in\mathcal{N}$. 
Now because of the boundedness of $\Delta y^k$ there exists a constant $c_3>0$ such that $\|\Delta y^k\| \leq c_3$ for all $k$. Now if we choose $\delta>0$ small enough such that $\delta \leq \tau_1 \epsilon_p/c_3$ then we have
\begin{align*}
\|A(x^k + \alpha \Delta x^k) - b\| & \leq (1-\alpha\tau_1)\epsilon_p + \alpha\tau_1\epsilon_p\frac{\|\Delta y^k\|}{c_3} \leq(1-\alpha\tau_1)\epsilon_p + \alpha\tau_1\epsilon_p = \epsilon_p.
\end{align*}
Now let us consider the case $g_p^k(0)\geq0$. Using \cref{eq:bounded,eq:xtz,eq:bound_primal} and the definition \cref{eq:def_g_primal} we obtain
\begin{align*}
g_p^k(\alpha) &= (x^k + \alpha\Delta x^k)^T(z^k + \alpha\Delta z^k) - \gamma_p \|A(x^k + \alpha\Delta x^k) - b \| \\
&=  (1 +\alpha(\beta_1 - 1)) (x^k)^T z^k + \alpha^2 (\Delta x^k)^T \Delta z^k - \gamma_p \|A(x^k + \alpha\Delta x^k) - b\| \\
&=  \alpha(\beta_1 + \tau_1 - 1) \underbrace{(x^k)^T z^k}_{\geq \epsilon^*} + (1 - \alpha \tau_1) (x^k)^T z^k + \alpha^2 \underbrace{(\Delta x^k)^T \Delta z^k}_{\geq -c_2}    \\
&\hspace{8cm} - \gamma_p \|A(x^k + \alpha\Delta x^k) - b\| \\
&\geq \alpha(\beta_1 + \tau_1 - 1)\epsilon^* + (1 - \alpha \tau_1) (x^k)^T z^k - c_2\alpha^2 \\
&\hspace{5cm} - \gamma_p( 1 -\alpha \tau_1) \|Ax^k - b\| - \gamma_p \alpha \delta \| \Delta y^k\| \\
& =  \alpha(\beta_1 + \tau_1 - 1)\epsilon^*  + (1 - \alpha \tau_1) \underbrace{\left( (x^k)^T z^k - \gamma_p\|Ax^k - b\|   \right)}_{\geq0} - c_2\alpha^2 -\gamma_p \alpha \delta \| \Delta y^k\| \\
&\geq  \alpha\left( (\beta_1 + \tau_1 - 1)\epsilon^* - \gamma_p \delta \| \Delta y^k\| - c_2 \alpha \right).
\end{align*}
Now choose $\delta$ small enough such that $\delta \leq \frac{(\beta_1 + \tau_1 - 1)\epsilon^*}{2\gamma_p c_3}$, then we have
\begin{align*}
(\beta_1 + \tau_1 - 1)\epsilon^* - \gamma_p \delta \| \Delta y^k\| &\geq (\beta_1 + \tau_1 - 1)\epsilon^* - \frac{(\beta_1 + \tau_1 - 1)\epsilon^*\|\Delta y^k \|}{2c_3} \\ &\geq (\beta_1 + \tau_1 - 1)\epsilon^*/2. 
\end{align*}
Now it easily follows that $g_p^k(\alpha)\geq 0$ for all $\alpha \leq \frac{(\beta_1 + \tau_1 - 1)\epsilon^*}{2c_2}$.

We can treat $g_d^k(\alpha)$ in a similar fashion. We leave out the details, but for completeness we summarize the main important results. First of all, by the second component of \cref{eq:update_eq} we have
\begin{equation*}
A^T(y^k + \alpha\Delta y^k) + z^k + \alpha \Delta z^k - c = (1-\alpha)(A^Ty^k + z^k -c) + \alpha s^k + \alpha\rho \Delta x^k
\end{equation*}
from which we get by using \cref{eq:acc_s}
\begin{equation}\label{eq:bound_dual}
\|A^T(y^k + \alpha\Delta y^k) + z^k + \alpha \Delta z^k - c \| \leq (1-\alpha\tau_2) \| A^Ty^k + z^k -c\| + \alpha\rho \|\Delta x^k\|.
\end{equation}
Since $\Delta x^k$ is bounded we have some constant $c_4>0$ such that $\|\Delta x^k\| \leq c_4$ for all $k$.
Suppose first that $g_d^k(0)<0$, which implies $\| A^Ty^k + z^k -c\|\leq\epsilon_d$, and take $\rho \leq \tau_2 \epsilon_d/c_4$, then we have 
\begin{equation*}
\|A^T(y^k + \alpha\Delta y^k) + z^k + \alpha \Delta z^k - c \| \leq (1-\alpha\tau_2)\epsilon_d + \alpha\rho \|\Delta x_k\| \leq \epsilon_p.
\end{equation*}
For the case $g_d^k(0)\geq0$ we have, following the same steps as for $g_p^k(\alpha)$, using \cref{eq:bounded,eq:xtz,eq:bound_dual} and the definition \cref{eq:def_g_dual}:
\begin{equation*}
g_d^k(\alpha)\geq  \alpha\left( (\beta_1 + \tau_2 - 1)\epsilon^* - \gamma_d \rho \| \Delta x^k\| - c_2 \alpha \right).
\end{equation*}
If we now have chosen $\rho$ small enough such that $\rho \leq \frac{(\beta_1 + \tau_2 - 1)\epsilon^*}{2\gamma_d c_4}$ then $g_d^k(\alpha)\geq0$ holds for all $\alpha \leq \frac{(\beta_1 + \tau_2 - 1)\epsilon^*}{2c_2}$.

Now the only thing we still need to consider is the inequality $h^k(\alpha)\geq0$ defined by \cref{eq:def_h}. Again using \cref{eq:xtz} we have
\begin{align*}
h^k(\alpha) &= (1-\alpha(1 - \beta_2)) \left(x^{k}\right)^T z^k -(x^k + \alpha\Delta x^k)^T(z^k + \alpha\Delta z^k) \\
&=  (1-\alpha(1 - \beta_2)) \left(x^{k}\right)^T z^k -  (1 +\alpha(\beta_1 - 1)) (x^k)^T z^k - \alpha^2 (\Delta x^k)^T \Delta z^k \\
&= \alpha(\beta_2 - \beta_1) (x^k)^T z^k -  \alpha^2 (\Delta x^k)^T \Delta z^k \\
&\geq \alpha((\beta_2 - \beta_1)\epsilon^* - \alpha c_2).
\end{align*} 
Hence, it follows that $h^k(\alpha)\geq 0$ for all $\alpha\leq(\beta_2 - \beta_1)\epsilon^*/c_2.$ 

Now using the results above and \cref{thm:upper_bound_alpha}, we get 
\begin{equation*}
\bar{\alpha}^k \geq \alpha^* := \min\left\{1, \frac{\beta_1(1-\gamma)\epsilon^*}{c_2 n}, \frac{(\beta_1 + \tau_1 - 1)\epsilon^*}{2c_2},\frac{(\beta_1 + \tau_2 - 1)\epsilon^*}{2c_2}, \frac{(\beta_2 - \beta_1)\epsilon^*}{c_2}\right\}
\end{equation*}
if $\rho$ and $\delta$ are chosen sufficiently small, more specifically if they satisfy 
\begin{equation*}
\delta \leq \min \left\{ \frac{\tau_1\epsilon_p}{c_3}, \frac{(\beta_1 + \tau_1 - 1)\epsilon^*}{2\gamma_p c_3}\right\} \text{ and }  \rho \leq \min \left\{ \frac{\tau_2\epsilon_d}{c_4}, \frac{(\beta_1 + \tau_2 - 1)\epsilon^*}{2\gamma_d c_4}\right\}.
\end{equation*}
This then leads to $(x^k)^T z^k \leq (1-\alpha^*(1-\beta_3))^k (x^0)^T z^0$, which implies that we have $\lim_{k\rightarrow \infty} (x^k)^T z^k = 0$. This contradicts \cref{eq:lower_bound_mu}, which concludes the proof.
\end{proof}

\begin{remark}\label{thm:omega_stop}
Let us conclude this section by briefly explaining the case when \cref{alg:KMM} terminates with some iterate satisfying $\|(x^k,z^k)\|_1>\omega$. The authors in \cite{kojima1993primal} show that in the exact and unregularized case (i.e. $\tau_1 = \tau_2 = 1$ and $\rho = \delta = 0$)  this implies there exists no feasible point of the primal-dual pair of linear programming problems \cref{eq:primal,eq:dual} in a large subspace of $\mathbb{R}^{n+m+n}$. The larger we take $\omega$ the larger this subspace of infeasible points becomes. This means that the primal-dual pair \cref{eq:primal,eq:dual} is likely to be infeasible, although no guarantee is given. As was pointed out by the authors in \cite{korzak2000convergence}, it does not seem possible to prove this result in the inexact case. Hence, do not attempt to prove this result for the inexact and regularized case. However, as they point out, this might be of little practical importance since the stopping criterion $\|(x^k,z^k)\|>\omega$ will in most cases not be satisfied in a reasonable amount of time. This downside of not being able to definitely determine infeasibility is thus inherited by \cref{alg:KMM}.
\end{remark}

\section{Implementation details} \label{sec:implementation}

In this section we explain some details of our implementation used in the numerical experiments of \cref{sec:num_ex}. We start with a general discussion on the reformulation of the linear system of equations, which is an important aspect for all primal-dual interior-point methods. Next, we explain how we can leverage single precision floating point arithmetic when solving linear programming problems with a dense constraint matrix. 

\subsection{Reformulation of the linear system of equations} \label{sec:reformulations}
Matrices of the form shown in \cref{eq:newton_eq} or \cref{eq:newton_eq_reg} are always transformed in computational practice to either a symmetric indefinite matrix or a positive definite matrix. Let us briefly explain how this can be achieved by considering the linear system \cref{eq:newton_eq_reg} for a general right-hand side $(\xi,\zeta,\eta) \in\mathbb{R}^{m + n + n}$. Writing out the equations we get: 
\begin{align}
&A\Delta x + \delta \Delta y = \xi, \label{eq:eq1} \\
 &A^T \Delta y + \Delta z - \rho\Delta x  = \zeta, \label{eq:eq2}\\ 
&Z\Delta x + X \Delta z = \eta. \label{eq:eq3}
\end{align}
Isolating $\Delta z$ from \cref{eq:eq3} and combining this with \cref{eq:eq2} we get
\begin{align}
&A^T \Delta y + X^{-1}(\eta - Z\Delta x) - \rho \Delta x = \zeta \nonumber \\
\Leftrightarrow &A^T \Delta y - \left(X^{-1}Z + \rho I\right)\Delta x = \zeta - X^{-1} \eta. \label{eq:dx}
\end{align} 
This leads to the so-called augmented system approach with a symmetric indefinite matrix
\begin{equation}\label{eq:augmented}
\begin{pmatrix} - D & A^T \\ A & \delta I \end{pmatrix}\begin{pmatrix}\Delta x\\ \Delta y\end{pmatrix} = \begin{pmatrix}\zeta - X^{-1} \eta  \\ \xi \end{pmatrix},
\end{equation}
where we introduce the notation $D = (X^{-1}Z + \rho I)$. After (approximately) solving for $\Delta x$ and $\Delta y$ we can recover $\Delta z$ from \cref{eq:eq3}. Even if \cref{eq:eq1,eq:eq2} are not satisfied exactly, for instance if we use a Krylov subspace method to obtain an approximate solution to \cref{eq:augmented}, we can always make sure that \cref{eq:eq3} does hold exactly (assuming exact arithmetic). Hence, we can assume a zero block in the final component of the residual in \cref{eq:update_eq}. 
\begin{remark}
The matrix in \cref{eq:augmented} is symmetric quasidefinite for $\rho$ and $\delta$ strictly positive \cite{vanderbei1995symmetric}. This implies that it allows a Cholesky-type factorization of the form $L\Delta L^T$ where $L$ is a lower triangular matrix and $\Delta$ is diagonal matrix containing both positive and negative entries, i.e. there is no need for $2\times 2$ pivoting. This also means that we can construct a permutation matrix solely to preserve as much sparsity as possible in the Cholesky-like factors. This permutation matrix need not change over the different interior-point iterations since the sparsity pattern of the augmented matrix stays the same. The sparse Cholesky-factorization is generally more efficient than indefinite solvers. These observations have for instance been exploited in \cite{vanderbei1995symmetric,saunders1996solving}. For more information on the stability of the Cholesky factorization applied to symmetric quasidefinite matrices we refer to \cite{gill1996stability}.
\end{remark}
We can reduce the equations in \cref{eq:augmented} a bit further by elimination of $\Delta x$. From \cref{eq:dx} we get
\begin{equation} \label{eq:dx_anders}
\Delta x = D^{-1} \left(A^T \Delta y - \zeta  + X^{-1} \eta \right).
\end{equation}
Multiplying both sides of this equation with $A$ and using the fact that $A\Delta x = \xi - \delta \Delta y$ from \cref{eq:eq1}, we get after reordering the terms:
\begin{equation}\label{eq:normal_eq}
 \left(A D^{-1} A^T + \delta I\right) \Delta y = \xi  + AD^{-1}(\zeta - X^{-1}\eta).
\end{equation}
This is known as the normal equations approach. The matrix here is positive definite for $\delta>0$ (even when $A$ is rank deficient), so we can for instance use a Cholesky factorization to solve this linear system of equations. After (approximately) solving for $\Delta y$ we can compute $\Delta x$ using \cref{eq:dx_anders} and $\Delta z$ again using \cref{eq:eq3}. Then we can show (again assuming exact arithmetic) that \cref{eq:eq2} also holds exactly, even if we do not solve \cref{eq:normal_eq} very accurately. Indeed, from \cref{eq:dx_anders} we get
\begin{equation*}
(X^{-1}Z+ \rho I) \Delta x = A^T \Delta y - \zeta  + X^{-1} \eta \Leftrightarrow  A^T \Delta y + \underbrace{X^{-1}(\eta - Z \Delta x)}_{= \Delta z \text{ using } \cref{eq:eq3}} - \rho \Delta x = \zeta.
\end{equation*}
Alternatively we can also compute $\Delta z$ directly from \cref{eq:eq2} and show that \cref{eq:eq3} also holds exactly in this case, even if \cref{eq:normal_eq} is not solved very accurately. Indeed, again starting from \cref{eq:dx_anders} we have
\begin{equation}
X^{-1}Z \Delta x + \underbrace{\rho \Delta x - A^T \Delta y + \zeta}_{= \Delta z \text{ using } \cref{eq:eq2}} = X^{-1}\eta.
\end{equation}
Now by multiplying both sides of this equation with $X$ we see that \cref{eq:eq3} holds exactly. This means that we can assume that in both these cases we have $\tau_2 = 1$ in \cref{eq:update_eq} if we use the normal equations approach. From computational experience we believe that this second option, i.e. computing $\Delta z$ using $\cref{eq:eq2}$, is slightly more numerically stable. 

Lastly we mention that we can monitor the norm of the residual term $r^k$ in \cref{eq:update_eq}, i.e. the inexactness in \cref{eq:eq1}, by computing the residual of the linear system \cref{eq:normal_eq}. This is because -- again using \cref{eq:dx_anders} --  we have
\begin{align}
\| A\Delta x + \delta \Delta y - \xi \| &= \| A  D^{-1} \left(A^T \Delta y - \zeta  + X^{-1} \eta \right) + \delta \Delta y - \xi \| \nonumber \\
&= \| \left(A D^{-1} A^T + \delta I\right) \Delta y - \xi  - AD^{-1}(\zeta - X^{-1}\eta) \|. \label{eq:stopping}
\end{align}
This is for instance useful when we use a Krylov subspace method, such as the Conjugate Gradient method, to solve the linear system \cref{eq:normal_eq} up to a certain accuracy. We can simply state the stopping criterion of this iterative method to make sure that \cref{eq:acc_r} is satisfied. This is in fact the approach that we consider in our experiments.

\subsection{Mixed-precision linear equation solver}

As previously mentioned we can simply use any specialized linear algebra routine developed in other work on inexact interior-point methods and incorporate these techniques in \cref{alg:KMM}. The focus in literature has mostly been on linear programming problems with a sparse constraint matrix, where the linear systems are solved using some Krylov subspace method with a specialized preconditioner \cite{gondzio2013convergence,cui2019implementation,monteiro2003convergence,al2009convergence}. However, for the remainder of this work we consider linear programming problems with a dense constraint matrix. These types of problems arise for instance in Basis Persuit denoising \cite{chen2001atomic}.

We aim to exploit the increased efficiency of IEEE single precision compared to IEEE double precision \cite{4610935,30711}. The double precision floating-point format is comprised of 64 bits, while single precision only uses 32 bits. In general we can execute single precision floating point operations (\textsc{flops}) approximately twice as fast as double precision \textsc{flops}. However, operations using single precision are also less accurate since the unit roundoff of single precision is $u_s = 2^{-24} \approx 6.0 \times 10^{-8}$, while double precision has a unit roundoff $u_d = 2^{-53} \approx 1.1 \times 10^{-16}$. IEEE half precision, which is a floating-point format using only 16 bits with unit roundoff $u_h = 2^{-11} \approx 4.9 \times 10^{-4}$, has also recently gained a lot of popularity due to a growing availability in hardware support (for instance on the NVIDIA P100 and V100 GPUs and the AMD Radeon Instinct MI25 GPU \cite{higham2019squeezing}). The unit roundoff of the precision is determined by the number of bits in the `significand', while the number of `exponent' bits determines the range of numeric values, see \cref{fig:bits} for an illustration. Switching to a lower precision thus not only decreases the accuracy, but also the range of values. This becomes especially important when considering half precision which has a limited range of $\pm 65,500$.

\begin{figure}
	\centering
	\includegraphics[width=1\linewidth]{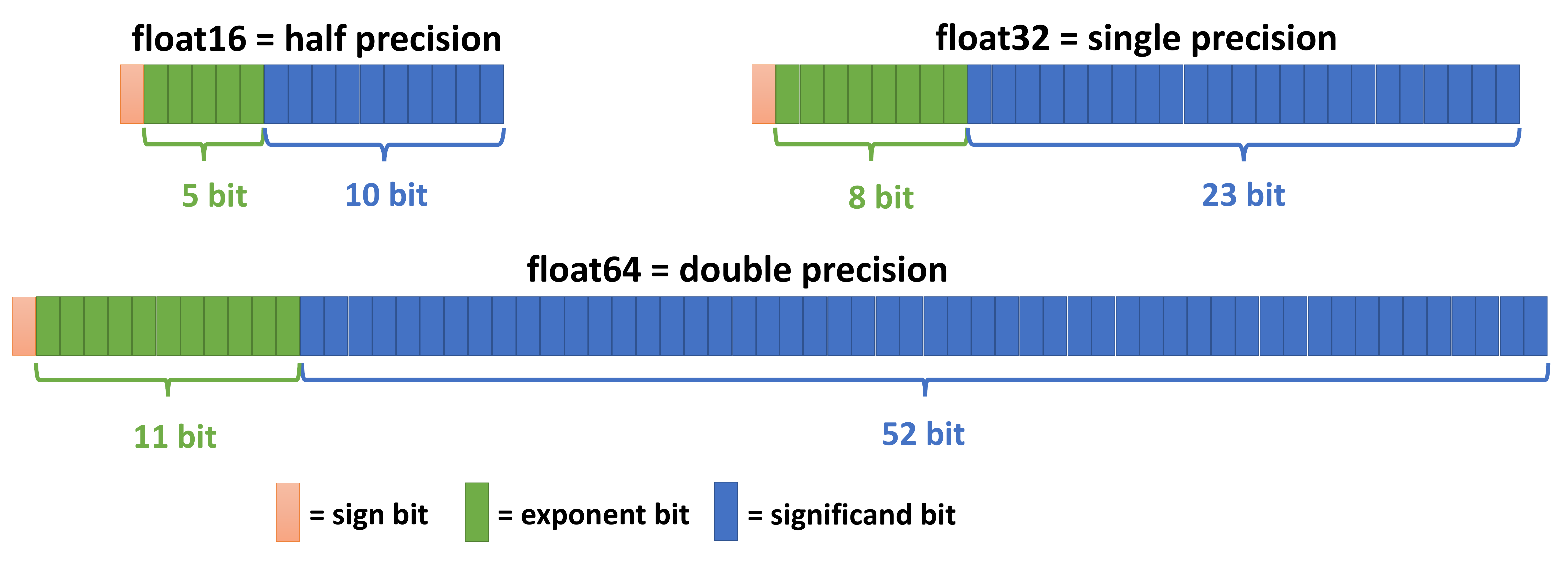} 
	\caption{Illustration of the IEEE half, single and double precision floating point formats. \label{fig:bits}}
\end{figure}

Operations in half precision can generally be executed twice as fast as single precision operations, and even up to 8 times faster on specialized tensor cores (for instance on the NVIDIA V100). However, in this paper we focus our attention on exploiting single precision arithmetic, since for the moment we have no access to hardware that supports half-precision arithmetic. In contrast, single precision arithmetic is available on almost all off-the-shelf hardware. Half precision floating point operations can of course easily be simulated in software, but this will evidently not lead to any performance gains compared to single precision. Hence, we leave a detailed discussion of half-precision arithmetic as possible future work.

Let us now turn our attention to how we can exploit single precision when solving a linear system of equations $M v = w$ for some general positive definite matrix $M\in\mathbb{R}^{m\times m}$. After performing a Cholesky factorization of $M$ in exact arithmetic we obtain an exact equality $M =L L^T$ with $L$ a lower triangular matrix. Solving a linear system $M v = w$ can then be done very efficiently by first solving the lower-triangular system $Ly = w$ for $y$ and subsequently solving the upper-triangular system $L^T v = y$ for $v$. Computing the Cholesky factorization requires approximately $m^3/3$ floating point operations, while solving a triangular linear system using forward or backward substitution requires $m^2$ \textsc{flops}. All these operations are of course subject to rounding errors in finite precision arithmetic.  An important consequence is that the Choleksy factorization might break down when the matrix $M$ is not sufficiently positive definite with respect to the floating point precision used for the factorization. However, even in such a case when the matrix is numerically indefinite or singular, we can still obtain useful information from the Choleksy factorization.

In \cite{higham2021exploiting,higham2019squeezing} the authors consider a diagonal perturbation of the matrix $M$ and perform a Cholesky factorization of the shifted matrix $M + c_m u\diag(M)$ instead. Here, $c_m$ is some constant depending on the dimension of the problem and $u$ is the unit roundoff of the precision used for the factorization. By a result from Demmel \cite{demmel1989floating} it follows that the Cholesky factorization of this perturbed matrix will always succeed for $c_m$ sufficiently large. 

For an $LU$ factorization in finite precision arithmetic it has been reported in literature that the computed $L$ and $U$ factors still contain useful information in the case when the condition number $\kappa(M)$ is (much) larger than $1/u$ and thus numerically singular.  More specifically it has been observed that the approximation $\kappa(L^{-1}MU^{-1}) \approx 1 + \kappa(M)u$ often holds in practice \cite{carson2017new}. We illustrate by a small example that this approximation also holds when computing a Cholesky factorization $LL^T \approx M + c_m u\diag(M)$ for some precision $u$. We generate a sequence of matrices $M\in\mathbb{R}^{200\times 200}$ with smallest eigenvalue $\lambda_{\min}(M) = 1$ and largest eigenvalue $\lambda_{\max}(M)=10^k$ for $k$ ranging from $0.1$ to $16$. Then we perform a Choleksy factorization in single precision of the matrix $M + 10u_s\diag(M)$, where $u_s$ is the unit roundoff, and then show the (2-norm) condition number $\kappa(M) = 10^k$ , $\kappa(L^{-1}ML^{-T})$ and the approximation $\kappa(L^{-1}ML^{-T}) \approx 1 + \kappa(M)u$ for the computed factor $L$. The result is given by \cref{fig:cond_est} (left). The same procedure is repeated for half precision. In the latter case we should also be careful with the possibility of overflow and underflow due to the limited range of half-precision floating point numbers. Hence, we first perform a diagonal scaling $H = D_M^{-1} M D_M^{-1}$ with $D_M = \sqrt{\diag(M)}$ as suggested in \cite{higham2021exploiting,higham2019squeezing}, which causes all diagonal entries to be one and all non-diagonal entries to be smaller than one (in absolute value). A Cholesky factorization is then computed in half precision of the perturbed matrix $H +10 u_h I$. The condition numbers $\kappa(M)$, $\kappa(L^{-1}HL^{-T})$ and approximation $\kappa(L^{-1}HL^{-T}) \approx 1 + \kappa(H) u_s$ are shown in \cref{fig:cond_est} (right). 

\begin{figure}
	\centering
	\begin{tabular}{cc}
	\includegraphics[width=0.5\linewidth]{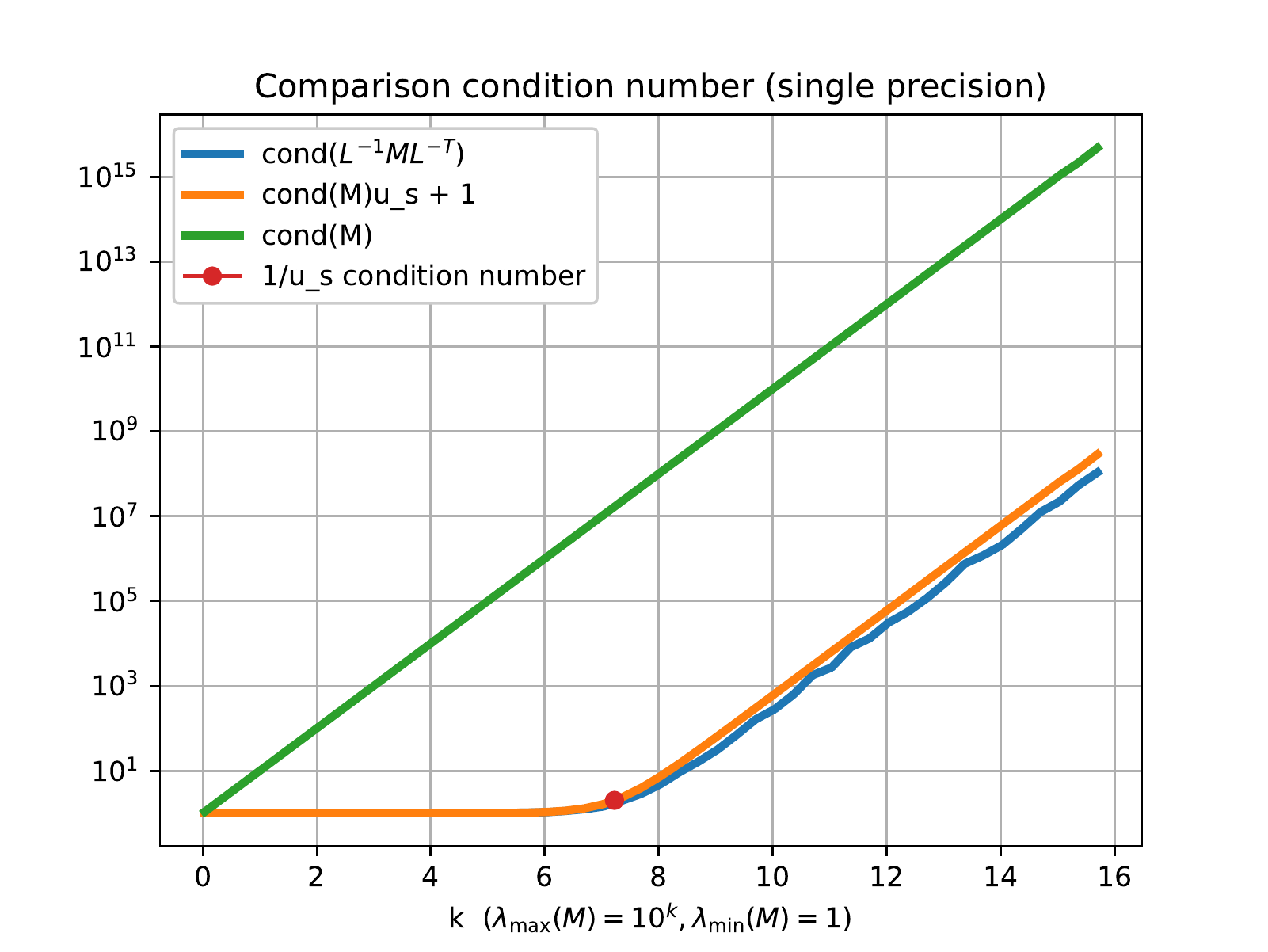} & \hspace{-1cm} \includegraphics[width=0.5\linewidth]{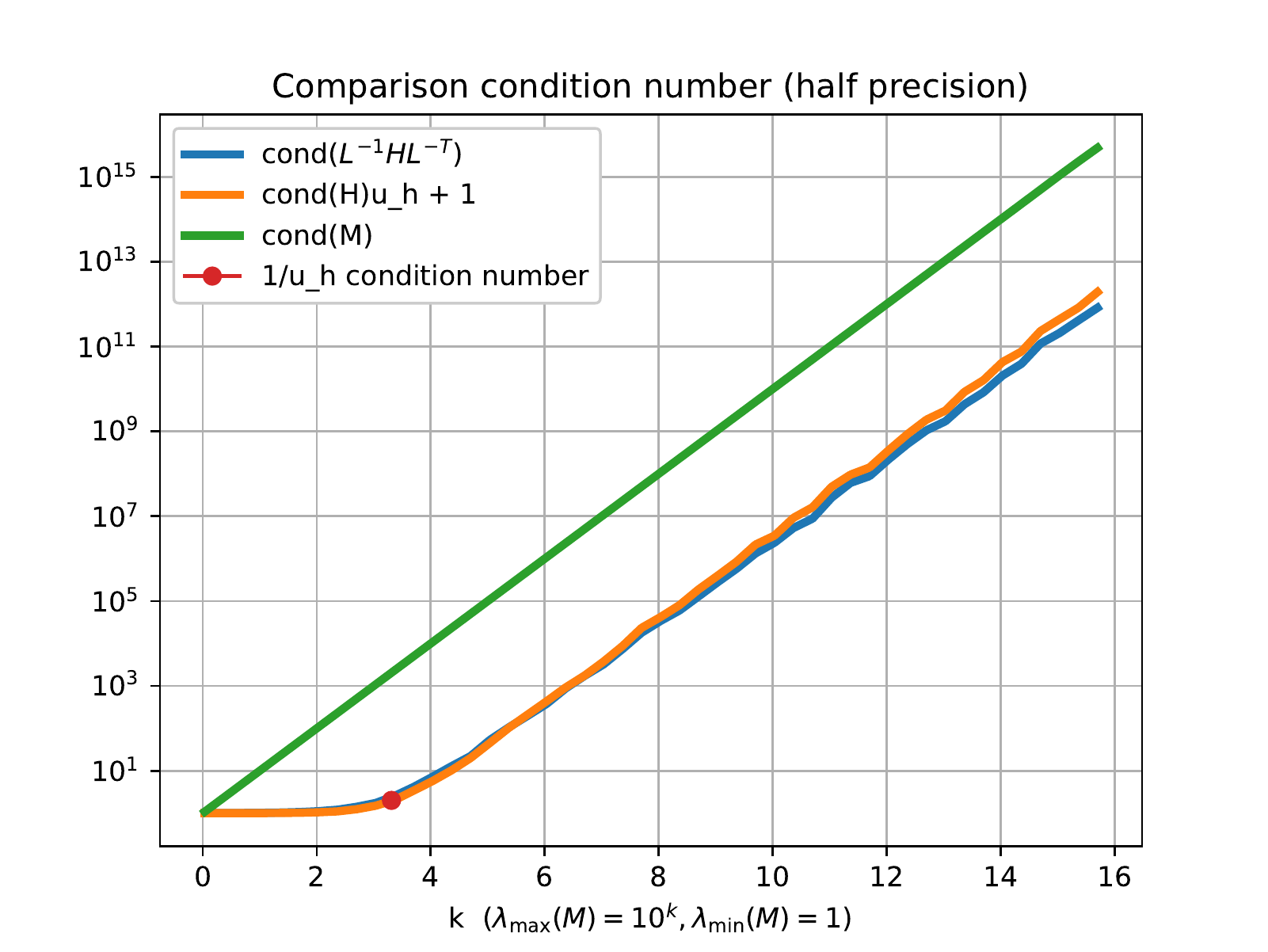}\\
	\end{tabular}
	\caption{Condition numbers $\kappa(M)$ and $\kappa(L^{-1}ML^{-T})$, together with the approximation for the condition number of the preconditioned matrix: $\kappa(M)u + 1$. Left: unit roundoff $u=u_s$ for single precision, right: $u=u_h$ for half precision. \label{fig:cond_est}}
\end{figure}

We can observe that if $\kappa(A)\leq 1/u$ then we obtain a good Cholesky factor, in the sence that $\kappa(L^{-1}A L^{-T})$ is close to one. However, even when the matrix is numerically singular (with respect to the precision of the factorization), we still have a significant reduction, proportional to the unit roundoff $u$, in the condition number. 
This implies that, even when the Cholesky factor can not be used to compute a very accurate solution of the linear system of equations, we can still use it as a preconditioner in the Conjugate Gradient method. Since convergence of the Conjugate Gradient method is determined by the condition number of the preconditioned matrix $L^{-1}ML^{-T}$, we can expect very rapid convergence as long as $M$ is not too ill-conditioned. 

Hence, to summarize we can solve a linear system of equations $M v = w$ using a mixed-precision approach, where we first compute a Cholesky factor $L$ of the shifted matrix $M + c_m u_s \diag(M)$ in single precision and apply the Conjugate Gradient method implemented in double precision. The preconditioner $L^{-T}L^{-1}$ is applied in single precision using backward and forward substitution, with the final result converted back to double precision. Note that in this case the iterations of the Conjugate Gradient method can be interpreted as some kind of iterative refinement. A closely related but slightly different approach was explored in \cite{higham2021exploiting}.

\begin{algorithm}
\caption{Mixed-precision solver for \cref{eq:normal_eq}. \hfill }
\label{alg:solver}
\begin{algorithmic}[1]
\STATE{\textbf{Input:}$A,x,z,\xi,\zeta,\eta,c_m,\rho,\delta,\text{tol}$}
\STATE{Compute diagonal matrix $D = X^{-1}Z + \rho I$  in \texttt{float64}} 
\STATE{Compute right-hand side $w = \xi + AD^{-1}\left(\zeta - X^{-1}\eta\right)$ in \texttt{float64}}
\STATE{Compute diagonal matrix $\tilde{D} = \sqrt{D^{-1}}$ in \texttt{float64}}
\STATE{Compute $\tilde{M}= A\tilde{D}$ in \texttt{float32}. \label{root}}
\STATE{Compute $M = \tilde{M}\tilde{M}^T + \delta I$ in \texttt{float32}.}
\STATE{Apply Cholesky to $M + c_m\diag(M)$ in \texttt{float32} and store factor $L$ in \texttt{float32}.}
\STATE{Apply the (preconditioned) Conjugate Gradient method in \texttt{float64} to 
\begin{equation*}
L^{-T}L^{-1} (AD^{-1}A + \delta I)\Delta y = L^{-T}L^{-1} w.
\end{equation*} 
Matrix vector products with $(AD^{-1}A + \delta I)$ are computed in \texttt{float64} without explicitly forming the matrix and the preconditioner $L^{-T}L^{-1}$ is applied using backward and forward substitution in \texttt{float32} with the result converted to \texttt{float64}. Conjugate Gradients is terminated when $\|(AD^{-1}A + \delta I)\Delta y - w \| \leq \text{tol}$.}
\STATE{\textbf{Output:} Approximate solution $\Delta y$ to the linear system \cref{eq:normal_eq}.}
\end{algorithmic}
\end{algorithm}

This approach can obviously be applied to the linear system of equations \cref{eq:normal_eq}, since the matrix is positive definite for $\delta>0$, even when $A$ is rank deficient. However, an important point that should not be overlooked is the fact that a dense matrix-matrix multiplication is also quite expensive. Explicitly forming the normal equations $AD^{-1}A^T$ is in fact more expensive than the Cholesky factorization in terms of number of operations, since the former operation requires approximately $m^2n$ \textsc{flops} (assuming we exploit the symmetry), while the latter requires about $m^3/3$ \textsc{flops}. Hence, forming the normal equations should also be done in single precision. We summarize our approach to solve \cref{eq:normal_eq} in \cref{alg:solver}, where we denote double precision and single precision with \texttt{float64} and \texttt{float32} respectively.

We can apply the mixed-precision solver \cref{alg:solver} in the context of the regularized inexact interior-point method \cref{alg:KMM}. To obtain an \textit{inexact} search direction $(\Delta x_k,\Delta y_k,\Delta z_k)$ satisfying \cref{eq:update_eq} together with \cref{eq:acc_r,eq:acc_s} we simply apply \cref{alg:solver} with input parameters  $x = x_k,\, z = z_k,\, \xi = b - Ax^k,\, \zeta = c - A^T y^k - z^k,\, \eta = \beta_1 \mu_k e - X^k z^k$ and $\text{tol}=(1-\tau_1) \|Ax^k - b\|$. We then compute $\Delta x^k$ using \cref{eq:dx_anders} and $\Delta z^k$ using \cref{eq:eq2}. Assuming that the only source of inexactness is in solving the linear system \cref{eq:normal_eq} we have that \cref{eq:acc_r} is satisfied (since we have \cref{eq:stopping}) and \cref{eq:acc_s} is satisfied with $\tau_2 = 1$.

\section{Numerical experiments}\label{sec:num_ex}
All experiments are performed on a laptop with Intel(R) Core(TM) i7-7700HQ CPU @ 2.80GHz. 
We have implemented \cref{alg:KMM} with the details as explained in \cref{sec:implementation} in Python 3.8.3 using the Spyder IDE version 4.2.0. 
The implementation is based on NumPy\footnote{\url{https://numpy.org/}} which allows us to easily convert variables from double precision to single precision using $\texttt{numpy.single}(\cdot)$ and conversely from single to double using $\texttt{numpy.double}(\cdot)$. The Cholesky factorization is implemented using SciPy\footnote{\url{https://www.scipy.org/}} as $\texttt{L}=\texttt{scipy.linalg.cholesky(M,lower = True)}$. The preconditioner application $L^{-T}L^{-1} w$ can then be efficiently implemented as
\begin{equation*}
\texttt{numpy.double(scipy.linalg.cho}\_\texttt{solve((L,True),numpy.single(w)))}.
\end{equation*}

Unfortunately there do not seem to be many suitable benchmark linear programming problems of the form \cref{eq:primal} with a dense constraint matrix $A$. Since we only want to perform experiments that are reproducible we use (sparse) test-problems from the NETLIB LP Test problem set\footnote{\url{https://www.numerical.rl.ac.uk/cute/netlib.html}}, but treat these matrices as if they were dense. We consider a selection of $14$ linear programming problems in standard form, where the constraint matrix $A$ has dimensions $m\times n$ with $m\geq 1000$ and $n \leq 10,000$. This selection is based on the fact that the benefit of single precision is expected to be much less apparent when the computational work is limited. 

We now list our choices of parameters in \cref{alg:KMM} which apply for all experiments, unless explicitly states otherwise. For the stopping criterion of the interior-point method we use $\omega = 10^{40}$ and tolerances $\epsilon = 10^{-8}n$ for the complementary condition and $\epsilon_p= 10^{-8}\|b\|$ and $\epsilon_d  = 10^{-8}\|c\|$ for the primal and dual feasibility respectively. Next, we use centering parameter $\beta_1 = 0.1$ and parameters $\beta_2=0.9$ and $\beta_3=0.95$ for the descent condition. For the conditions in \cref{eq:acc_r,eq:acc_s} we set $\tau_1 = 0.95$ and $\tau_2=1$ respectively. Regularization parameters $\rho =\delta=10^{-10}$ were sufficiently small for all test-problems considered. The constant $c_m = 30$ is chosen for the mixed-precision solver \cref{alg:solver}. Lastly we use neighborhood parameters $\gamma = \gamma_p = \gamma_d = 10^{-8}$ in \cref{eq:N}. The initial point $(x_0,y_0,z_0)$ is computed as explained in section 14.2 of \cite{nocedal2006numerical} with a small modification to handle rank deficient matrices $A$. 

Before calculating $\bar{\alpha}^k$ on line \ref{alpha_bar} in \cref{alg:KMM} we first compute trial step-lengths $\alpha^k_p = 0.99995\alpha_p^{*,k}$ and $\alpha^k_d = 0.99995\alpha_d^{*,k}$ and check whether the new iterates $x^{k+1} = x^{k} + \alpha_p^k \Delta x^k$ and $(y^{k+1},z^{k+1}) = (y^{k},z^{k}) + \alpha_d^k (\Delta y^k,\Delta z^k)$ remain in the neighborhood $\mathcal{N}$ and satisfy the descent property
\begin{equation*}
\left(x^{k+1}\right)^T z^{k+1} \leq (1 - \alpha^{*,k}(1-\beta_3))\left(x^k\right)^Tz^k
\end{equation*}
since by \cref{thm:compute_bar} this implies that the conditions on line \ref{enough} hold. If not, we compute $\bar{\alpha}^k$ and choose $\alpha_p^k = \alpha_d^k = \bar{\alpha}^k$.

\begin{figure}
	\centering
	\begin{tabular}{cc}
	\includegraphics[width=0.5\linewidth]{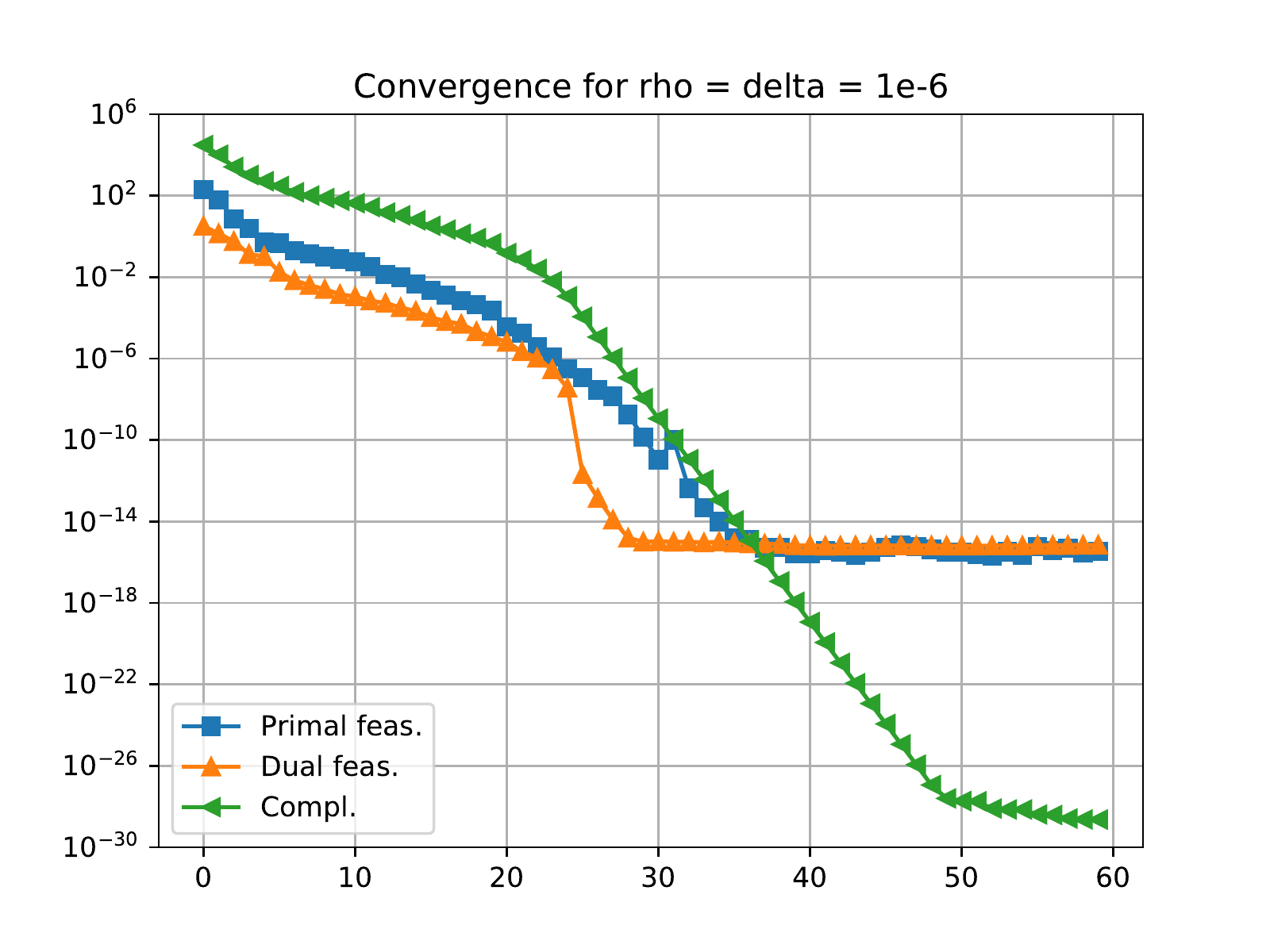} & \hspace{-1cm} \includegraphics[width=0.5\linewidth]{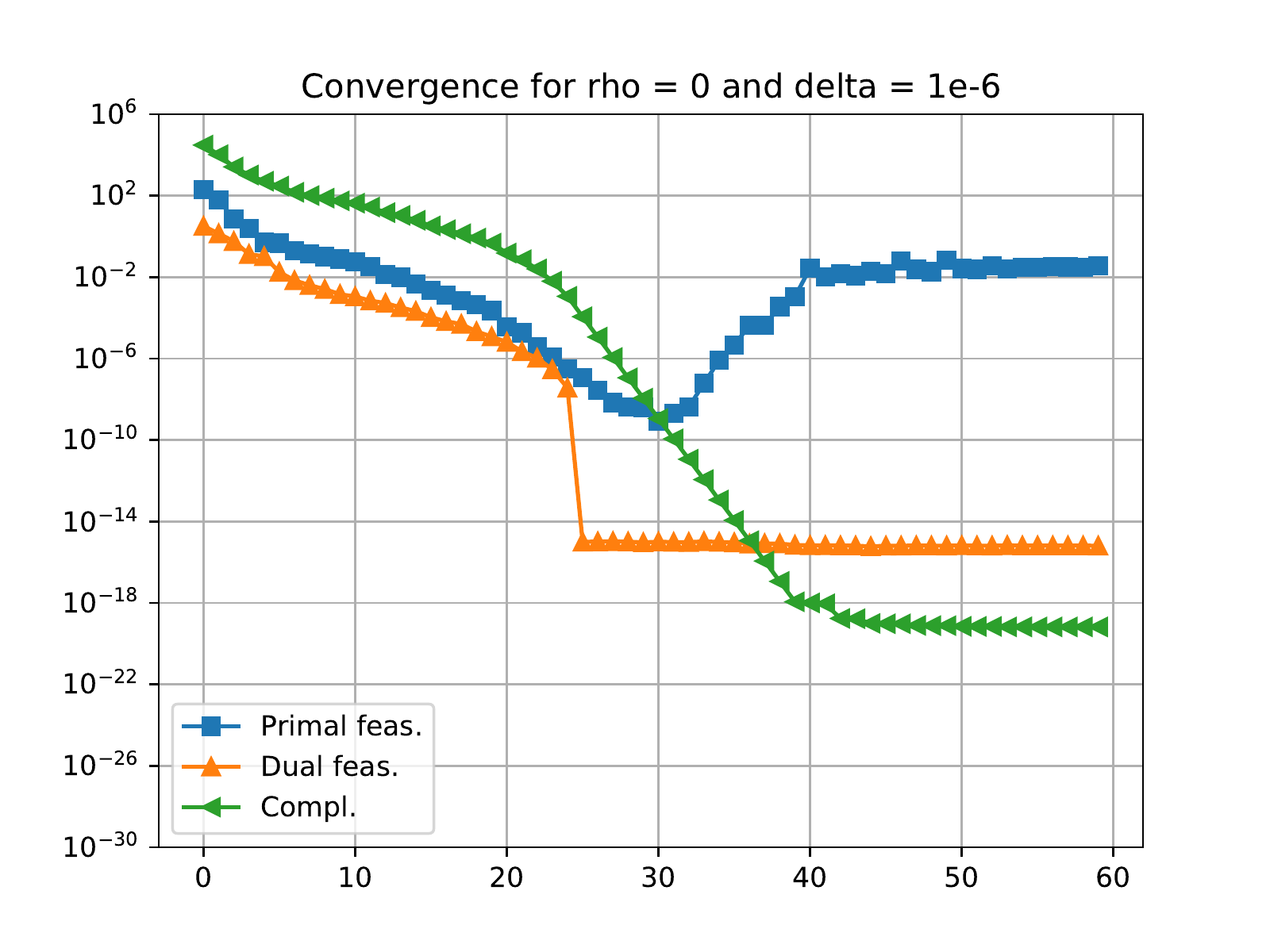}\\
	\end{tabular}
	\caption{Comparison of convergence of the interior-point method for two different choices of regularization parameters applied to NETLIB test-problem \texttt{ship12s}. Left: regularization parameters $\rho = \delta = 10^{-6}$, right: $\rho = 0$ and $\delta = 10^{-6}$.  \label{fig:compare_conv}}
\end{figure}

\begin{figure}
	\center
	\begin{tabular}{cc}
	\includegraphics[width=0.5\linewidth]{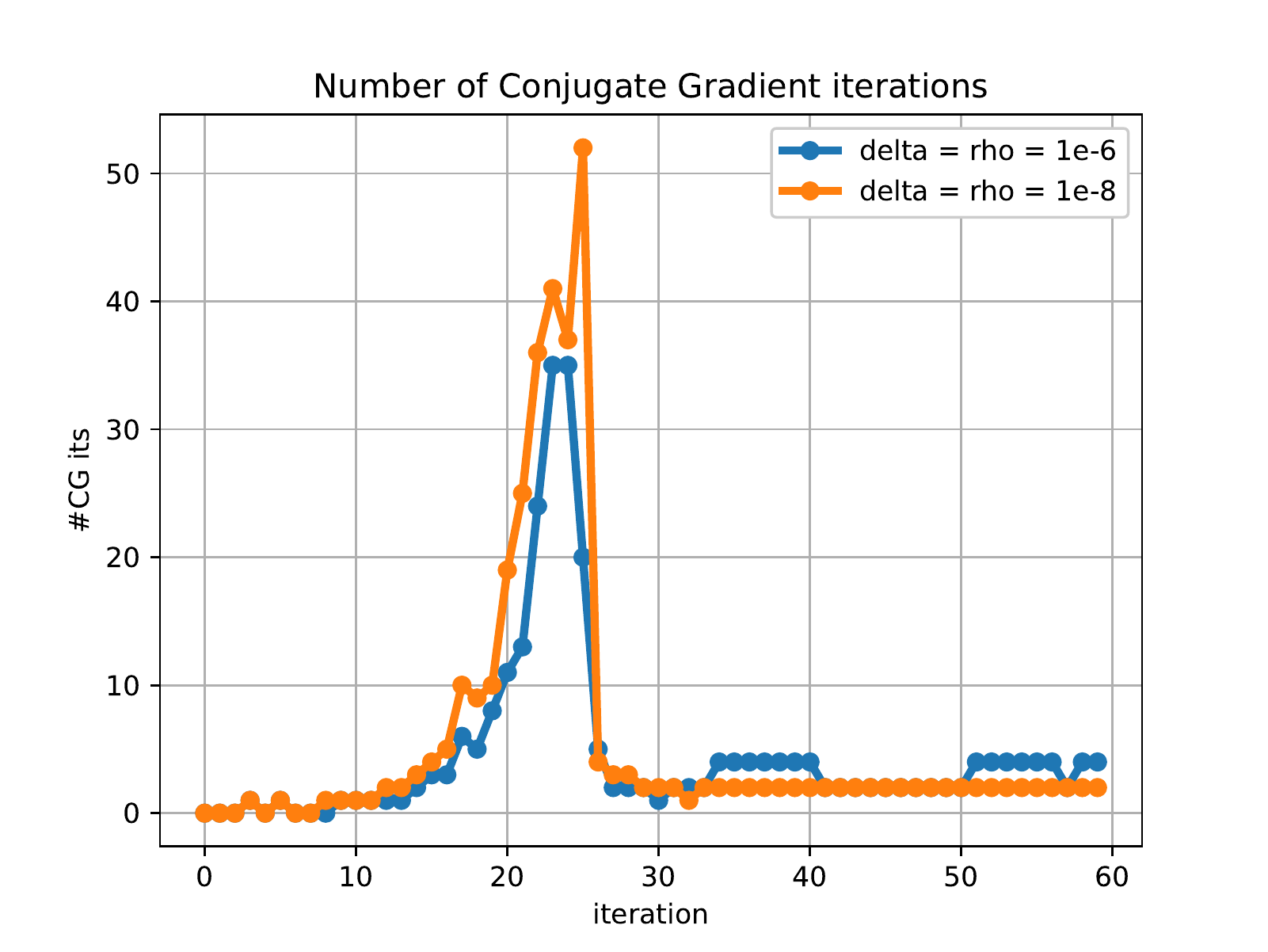} & \hspace{-1cm} \includegraphics[width=0.5\linewidth]{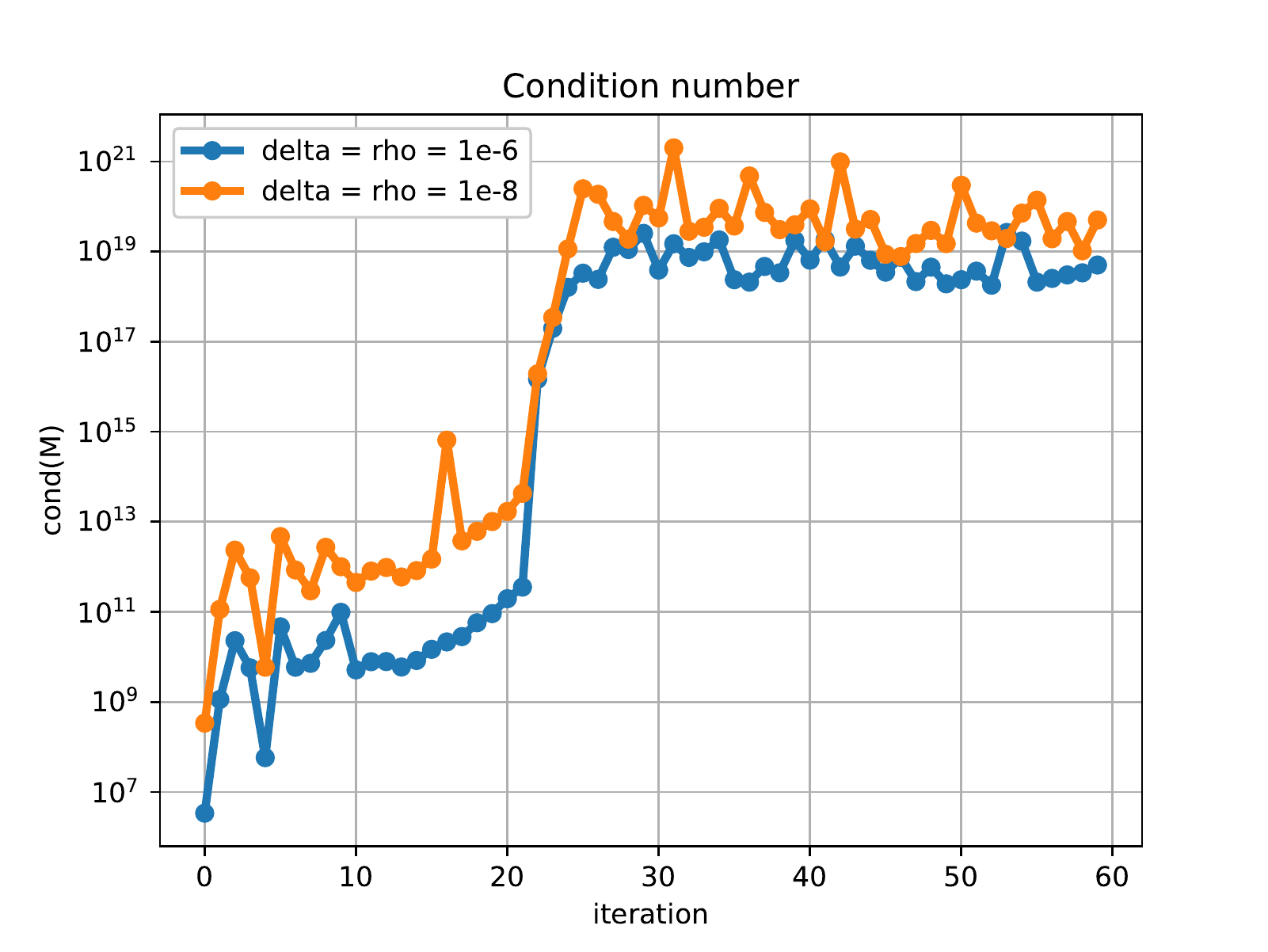}\\
	\end{tabular}
\caption{Left: number of Conjugate Gradients iterations needed in each interior-point iteration for two different choices of regularization parameters: $\rho = \delta = 10^{-6}$ and $\rho = \delta = 10^{-8}$. Right: condition number of the matrix $M = A(X^{-1}Z + \rho I)^{-1}A^T + \delta I$ for the different interior-point iterations. NETLIB test-problem \texttt{ship12s}. \label{fig:cg_its}}
\end{figure}

In our first experiment we illustrate the benefit of the regularization parameters $\rho$ and $\delta$. We apply \cref{alg:KMM} to NETLIB test-problem \texttt{ship12s}. The constraint matrix $A$ has dimensions $1151 \times 2869$ and contains $109$ rows with all zeroes. Hence, the matrix is significantly rank-deficient and we can not apply the algorithm with $\delta = 0$. Normally, we would simply remove these rows from the matrix, but for the purpose of this experiment we leave them in. We apply the algorithm once with $\rho =\delta = 10^{-6}$ and once with $\rho = 0$ and $\delta = 10^{-6}$ and let the algorithm run for a fixed number of iterations. The result is given by \cref{fig:compare_conv} where we show the convergence metrics $\|Ax^k - b\|/\|b\|$ for primal feasibility, $\|A^Ty + z - c\|/\|c\|$ for dual feasibility and $(x^k)^Tz^k /n$ for the complementarity condition. First of all we can clearly see that the algorithm converges nicely towards a primal-dual solution of the linear programming problems \cref{eq:primal,eq:dual} even though the matrix $A$ is rank deficient. Next, we can also clearly observe the benefit of the regularization term $\rho$. In the left figure, when using $\rho = 10^{-6}$ we have a very accurate solution in terms of all the convergence metrics shown, while the primal residual shows some very unstable behavior when $\rho = 0$. This is due to the fact that the matrix $X^{-1}Z$ has components converging either to $0$ or $\infty$ due to the complementarity condition, which causes numerical difficulties. 

In \cref{fig:cg_its} we report the condition number of the matrix and the number of Conjugate Gradient iterations needed to converge in each iteration of the interior-point method  when using $\rho = \delta = 10^{-6}$ and $\rho = \delta = 10^{-8}$. In the first number of iterations of the interior-point method we only need a very few CG iterations to satisfy the stopping criterion in \cref{alg:solver}. This is because of two reasons: first, the matrix $AD^{-1} A^T + \delta I$ is still relatively well-conditioned in the early iterations, and secondly, because the primal residual is still relatively large we have that the stopping criterion for the Conjugate Gradient method $\|(AD^{-1}A + \delta I)\Delta y - w \| \leq (1-\tau_1) \|Ax^k - b\|$ is quite loose. After a while, when the condition number starts to grow and the tolerance becomes more strict, we need to perform a lot more CG iterations. The effect is a bit more pronounced for the case $\rho=\delta=10^{-8}$ since the matrix is a little bit more ill-conditioned. It is also interesting to see that when the interior-point method has converged to double precision accuracy (around iteration 35 according to \cref{fig:compare_conv}) that we need to perform only a very few CG iterations, because in that case the solution $\Delta y$ is very close to zero, which is the initial iterate that we choose in CG. 

When the number of CG iterations becomes relatively large, the benefit of using a single precision factorization gets nullified by the additional computational cost of the CG iterations. In this case it would be more beneficial to switch from a single precision Cholesky factorization to simply using double precision for all computations. We can choose this `switch-point' based on a simple heuristic. In the ideal case we can expect that a Cholesky factorization and matrix-matrix multiplication as shown in \cref{alg:solver} is twice as fast in single precision than the same operations in double precision. Hence, we can expect a benefit of using the mixed-precision solver as long as the time it takes to perform the CG iterations does not exceed the time for constructing and factorizing the matrix in single precision. With this in mind modify our algorithm as follows. We monitor the CPU time of the construction of the matrix and the Cholesky factorization, as well as the run-time of the Conjugate Gradient method. As soon as the run-time of the Conjugate Gradient method exceeds $0.75\times$ the average time for the construction and factorization of the matrix, we switch to a full double-precision implementation. The factor $0.75$ is chosen here, because there is some additional overhead in practice when using the mixed-precision solver \cref{alg:solver}. We test this simple heuristic with the next experiment.

\begin{table}
\begin{center}
\begin{tabular}{l||rr|rr|rrrr}
						 &&& \multicolumn{2}{c|}{Double precision} & \multicolumn{4}{c}{Mixed-precision}  \\ 
						\textbf{Problem} & $m$ & $n$ & $\#$iter  & \text{time} & $\#$iter &  \text{time} & $\overline{\text{CG}}_\text{its}$ & $k_\text{switch}$ \\ \hline \hline 
						\texttt{bnl2}        & 2,324  & 4,486  & 58  &  22.93s & 58  & 17.28s & 3.18   & 48 \\
						\texttt{d2q06c}      & 2,171  & 5,831  & 52  &  19.87s & 52  & 16.34s & 3.69   & 31 \\
						\texttt{degen3}      & 1,503  & 2,604  & 30  &   3.79s & 30  & 3.15s  & 4.77   & 10 \\
			      \texttt{maros$\_$r7} & 3,136  & 9,408  & 25  &  29.21s & 25  & 19.08s & 2.12   & $\infty$ \\
						\texttt{qap12}       & 3,192  & 8,856  & 30  &  33.61s & 31  & 30.68s & 3.69   & 15 \\
						\texttt{sctap2}      & 1,090  & 2,500  & 23  &   1.47s & 23  & 0.97s  & 1.28   & 17 \\
						\texttt{sctap3}      & 1,480  & 3,340  & 23  &   2.91s & 23  & 2.03s  & 1.75   & 19 \\
						\texttt{ship12l}     & 1,151  & 5,533  & 30  &   3.43s & 30  & 2.77s  & 1.50   & 17 \\
						\texttt{ship12s}     & 1,151  & 2,869  & 30  &   2.54s & 30  & 2.06s  & 1.88   & 17 \\
						\texttt{stocfor2}    & 2,157  & 3,045  & 47  &  12.83s & 51  & 12.28s & 0.95   & 18 \\
						\texttt{truss}       & 1,000  & 8,806  & 31  &   4.47s & 31  & 3.61s  & 3.47   & 16 \\
						\texttt{woodw}       & 1,098  & 8,418  & 133 &  19.71s & 142 & 20.20s & 1.45   & 20 \\
						\texttt{cre$\_$a}    & 3,516  & 7,248  & 52  &  74.86s & 55  & 58.78s & 3.77   & 30 \\
						\texttt{cre$\_$c}    & 3,068  & 6,411  & 54 &   54.95s & 61  & 42.28s & 1.74   & 30 \\
\end{tabular}
\end{center}
\caption{Comparison of the mixed-precision interior-point method and a double precision implementation applied to a selection of NETLIB test-problems. We report the dimensions of the constraint matrix ($m,n$), the total number of interior-point iterations needed to converge ($\#$iter) and the total run-time (time) for both implementations. For the mixed-precision interior-point method we also report the average number of Conjugate Gradients iterations ($\overline{\text{CG}}_\text{its}$) and the switch-point ($k_\text{switch}$).} \label{table:table}
\end{table}

\begin{figure}
	\centering
	\begin{tabular}{cc}
	\includegraphics[width=0.49\linewidth]{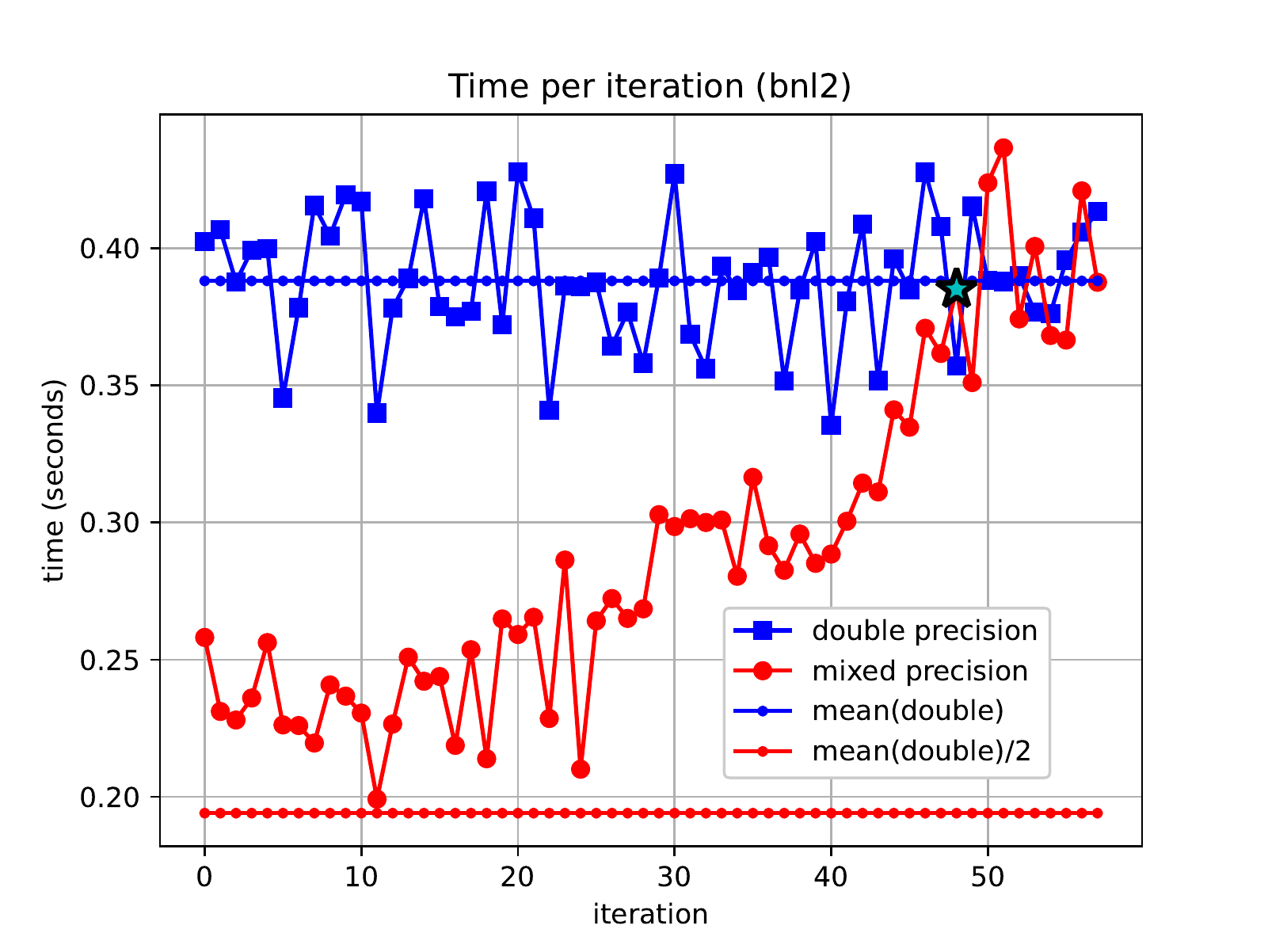} & \hspace{-1cm} \includegraphics[width=0.49\linewidth]{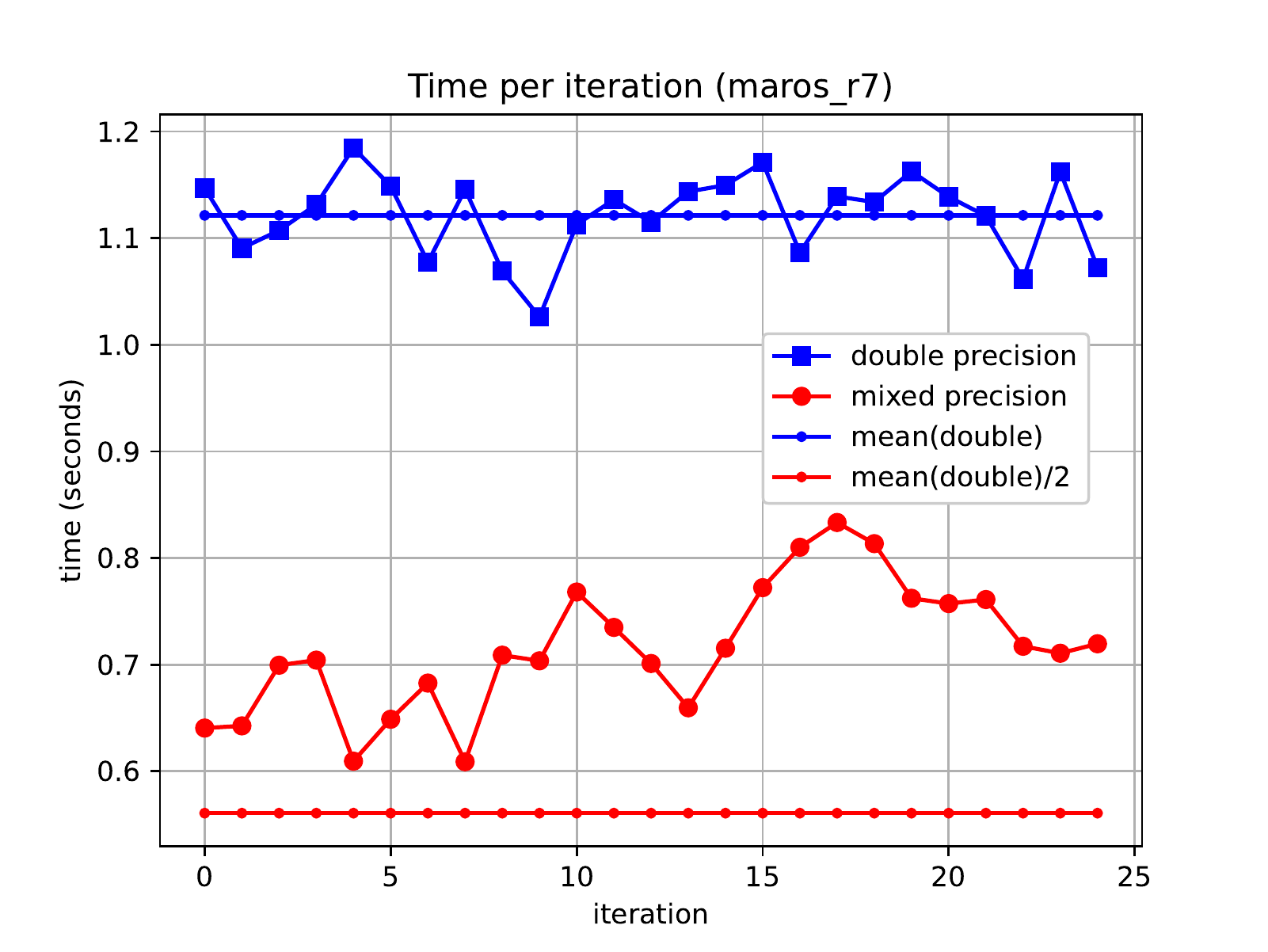} \vspace{-0.2cm}\\
	\includegraphics[width=0.49\linewidth]{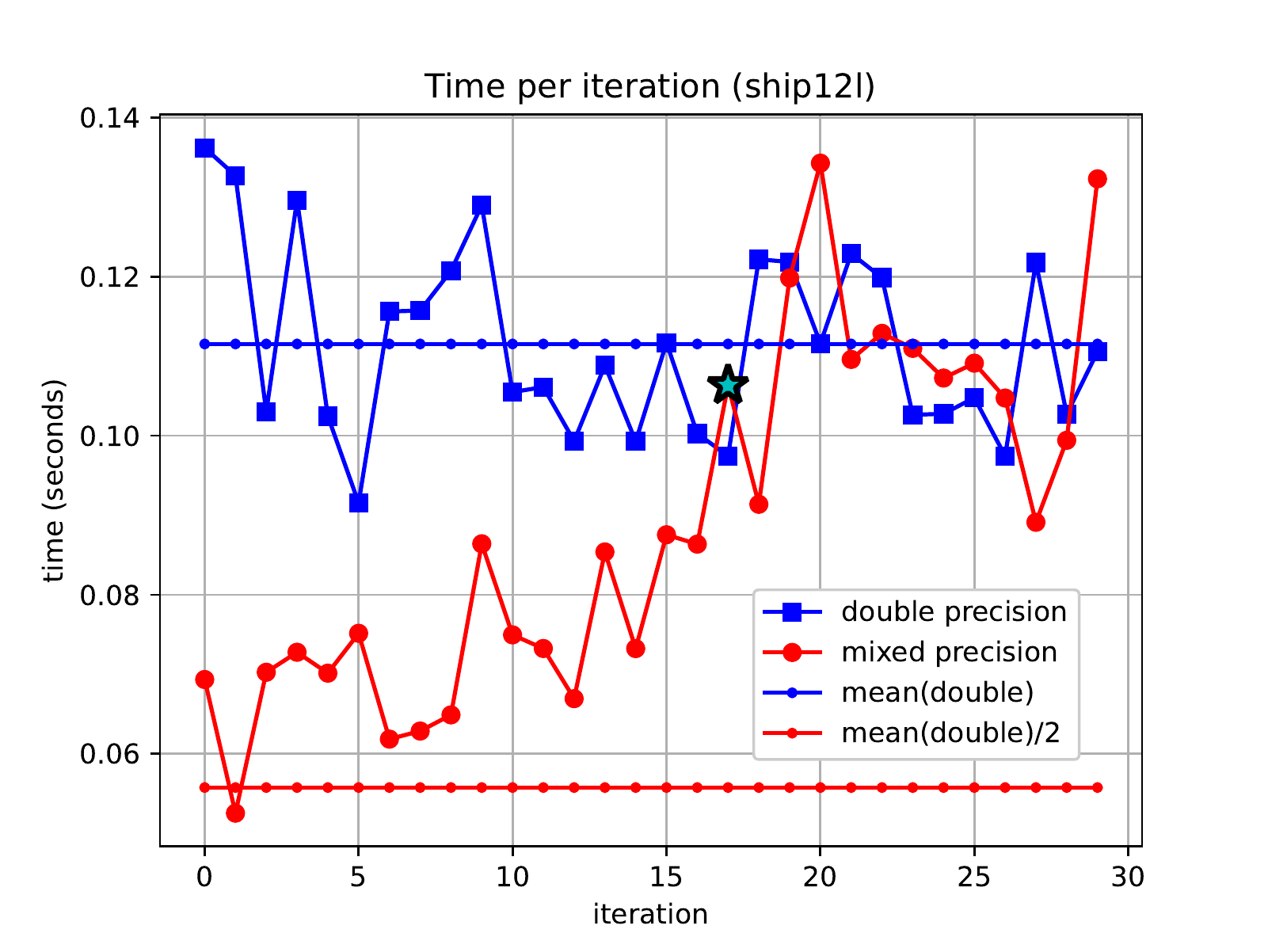} & \hspace{-1cm} \includegraphics[width=0.49\linewidth]{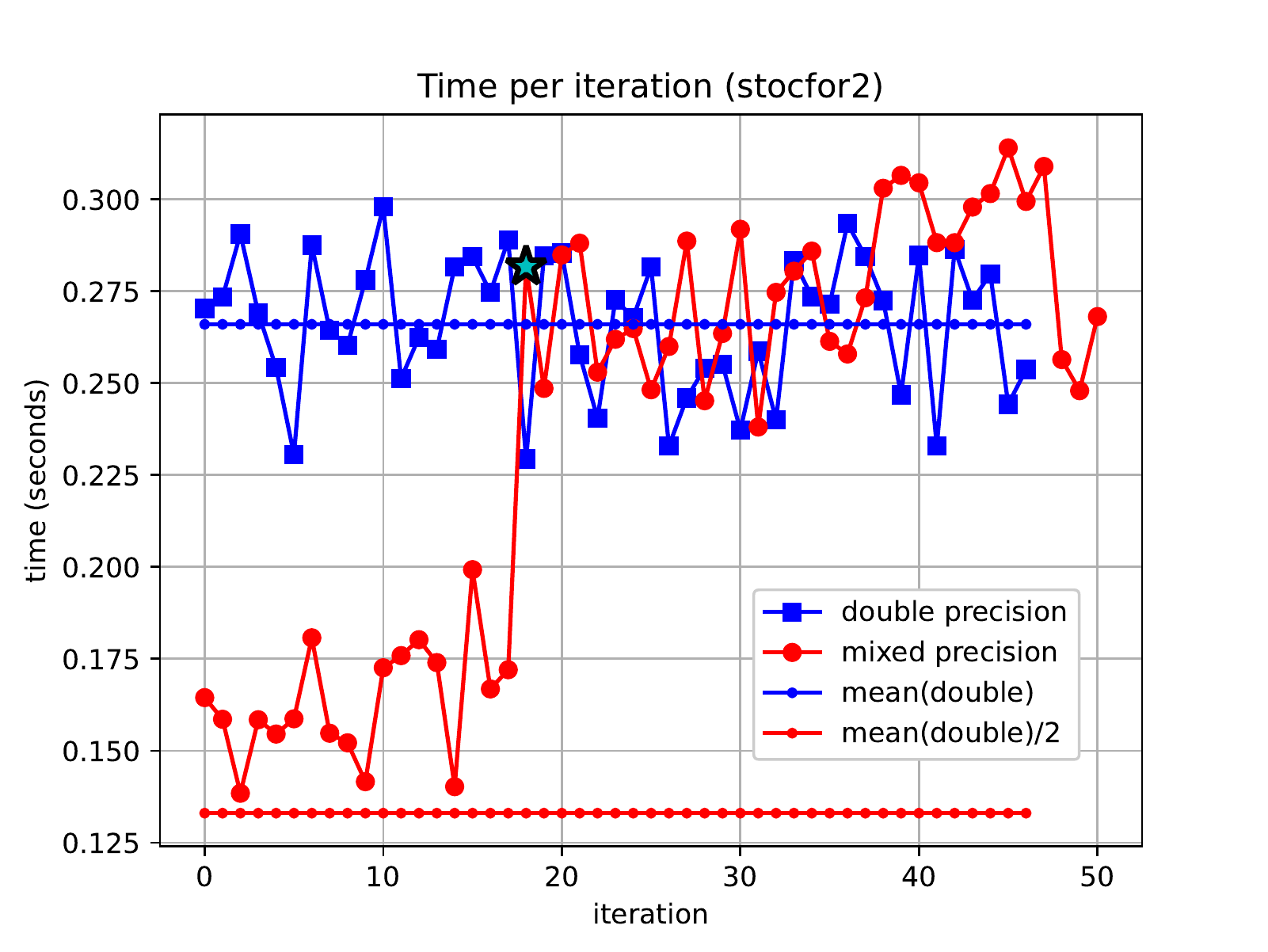} \vspace{-0.2cm}
	\end{tabular}
	\caption{The time per iteration for the mixed-precision interior-point method compared to a double precision implementation applied to a selection of NETLIB test-problems. The star indicates the switch-point $k_\text{switch}$ determined by the heuristic used in the mixed-precision implementation. \label{fig:timings}}
\end{figure}

For our final experiment we apply the regularized inexact interior-point method to the $14$ selected NETLIB test-problems, with the parameters as specified in the beginning of the section. We compare our implementation based on the mixed-precision solver \cref{alg:solver}, together with the switch-point explained in the previous paragraph and compare it with the `exact' counterpart where we use a double precision Choleksky factorization to solve the linear system of equations. We compare the number of (interior-point) iterations needed to converge, as well as the total run-time. For the mixed-precision solver we also report the average number of Conjugate Gradient iterations $\overline{\text{CG}}_\text{its}$ needed in each interior-point iteration and the switch-point $k_\text{switch}$ as determined by the heuristic. Note that $k_\text{switch}$ can vary slightly for different runs, since it is determined by actual timings during the execution. However, the overall performance and behavior does not vary much. We report the results in \cref{table:table}. 

The mixed-precision algorithm outperforms the double precision implementation in all test-problems except \texttt{woodw}. This is due to the fact that the switch-point happens very early in this example and a lot of interior-point iterations are needed for convergence. Interestingly, this is also the only test-problem where we need to compute $\bar{\alpha}^k$ in some of the iterations, see \cref{thm:compute_bar}. The benefit of the mixed-precision solver is rather limited for the test-problems where the switch-point occurs relatively early , see for example \texttt{stocfor2}. For test-problems where we can perform a lot of the Cholesky factorizations in single precision, it is clear that there is a significant reduction in the computational time. Note that for most of these problems, there is no difference in the total number of interior-point iterations needed to converge. 

In \cref{fig:timings} we show some more detailed timing results. In early iterations we can see that the mixed-precision solver requires slightly more than half the average time per iteration of the double precision implementation. Then, when the number of Conjugate Gradients iterations starts to increase, we can observe that the run-time of the interior-point method also increases. For most of these problems this increase is relatively gradual. At a certain point, the overhead of the Conjugate Gradients iterations is so large that the algorithm determines that we should switch to double precision. Note that this switch-point is chosen quite close to the average time per iteration of the double precision implementation, which is of course the ideal scenario. Hence, the heuristic seems to work quite well. 

\section{Conclusions and outlook}\label{sec:conclusions}

This work presents a convergence analysis of a regularized inexact interior-point method for solving linear programming problems. We illustrate the main benefits of regularization, namely that the fact that it alleviates the numerical difficulties arising from ill-conditioning of the matrices induced by the complementarity condition and that it allows us to easily solve problems with a rank deficient matrix. Another benefit of regularization that is not explored in detail in this work is the possibility of using a Cholesky-type factorization of the symmetric quasidefinite matrix arising from the augmented system approach. This is definitely worth exploring in future research since the augmented form is more suitable for sparse problems, especially when the matrix $A$ has one or more relatively dense column, since this results in a much denser matrix using the normal equations approach. The sparse Cholesky-factorization is also in general much more efficient than indefinite factorizations, since there is no need for pivoting to ensure numerical stability.

Solving the linear system of equations \textit{inexactly} in each iteration of the interior-point method offers a great number of possibilities in trying to improve computational performance of the method. The most notable and well-studied approach is the use of Krylov subspace methods combined with suitable preconditioners. The interior-point method developed in the current paper offers the opportunity to use many of the existing specialized linear algebra routines developed in other work. The main focus of these techniques has mostly been on linear programming problems with a sparse constraint matrix. However, we focus our attention on dense matrices. We develop a mixed-precision solver for the normal equations based on the Conjugate Gradient method preconditioned with a Cholesky factorization computed in IEEE single precision. We show that this leads to a significant reduction of the computational time for the majority of test-problems considered in the numerical experiments. We also touch on the possibility of exploiting IEEE half precision, which is expected to provide an even more substantial improvement. This is left a possible future work. 

\bibliographystyle{siamplain}
\bibliography{references}
\end{document}